\newcommand{\@affiliation}{}
\newenvironment{proof}{\paragraph{Proof:}}{\hfill$\square$}
\newcommand{\qed}{}
\numberwithin{equation}{section}
\renewcommand{\vec}[1]{\boldsymbol{\mathbf{#1}}}
\newcommand{\ds}{\displaystyle}
\newcommand{\bR}{\mathbb{R}}
\renewcommand{\phi}{\varphi}
\DeclareMathOperator{\dd}{d\!}
\DeclareMathOperator{\ddd}{~ d\!}
\newcommand{\del}{\partial}
\newcommand{\dt}{\dd t}
\let\div\relax
\DeclareMathOperator{\div}{div}
\newcommand{\oset}[3][0ex]{%
  \mathrel{\mathop{#3}\limits^{
    \vbox to#1{\kern-2\ex@
    \hbox{$\scriptstyle#2$}\vss}}}}
\DeclarePairedDelimiterX{\norm}[1]{\lVert}{\rVert}{
\ifblank{#1}{\:\cdot\:}{#1}
}
\newcommand{\avg}[1]{\ensuremath{\{\mkern-6mu\{#1\}\mkern-6mu\}} }
\newcommand{\jump}[1]{\ensuremath{[\mkern-3mu[#1]\mkern-3mu]} }
\newcommand{\opnorm}{\@ifstar\@opnorms\@opnorm}
\newcommand{\@opnorms}[1]{%
  \left|\mkern-1.5mu\left|\mkern-1.5mu\left|
   #1
  \right|\mkern-1.5mu\right|\mkern-1.5mu\right|
}
\newcommand{\@opnorm}[2][]{%
  \mathopen{#1|\mkern-1.5mu#1|\mkern-1.5mu#1|}
  #2
  \mathclose{#1|\mkern-1.5mu#1|\mkern-1.5mu#1|}
}
\newcommand{\timed}{\frac{{\text{d}}}{\text{d}t}}
\newcommand{\Solmixed}{{\vec{U}}}
 \newtheorem{thm}{Theorem}[section]
 \newtheorem{Corollary}[thm]{Corollary}
 \newtheorem{Definition}[thm]{Definition}
 \newtheorem{Proposition}[thm]{Proposition}
 \newtheorem{rem}[thm]{Remark}
\title{An Energy-Stable Discontinuous Galerkin Method for the Compressible Navier--Stokes--Allen--Cahn System \thanks{This work was supported by the Deutsche Forschungsgemeinschaft (DFG, German Research Foundation) through the project  \mbox{GRK 2160 - Droplet Interaction Technologies} with the project number 270852890 
and the DFG under Germany's Excellence Strategy - EXC 2075 with the project number 390740016.
}}
\newif\ifuniqueAffiliation
\author{ \href{https://orcid.org/0000-0000-0000-0000}{\includegraphics[scale=0.06]{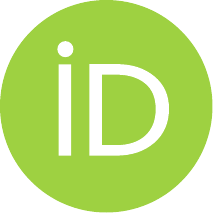}\hspace{1mm}David S.~Hippocampus}\thanks{Use footnote for providing further
		information about author (webpage, alternative
		address)---\emph{not} for acknowledging funding agencies.} \\
	Department of Computer Science\\
	Cranberry-Lemon University\\
	Pittsburgh, PA 15213 \\
	\texttt{hippo@cs.cranberry-lemon.edu} \\
	%% examples of more authors
	\And
	\href{https://orcid.org/0000-0000-0000-0000}{\includegraphics[scale=0.06]{orcid.pdf}\hspace{1mm}Elias D.~Striatum} \\
	Department of Electrical Engineering\\
	Mount-Sheikh University\\
	Santa Narimana, Levand \\
	\texttt{stariate@ee.mount-sheikh.edu} \\
	%% \AND
	%% Coauthor \\
	%% Affiliation \\
	%% Address \\
	%% \texttt{email} \\
	%% \And
	%% Coauthor \\
	%% Affiliation \\
	%% Address \\
	%% \texttt{email} \\
	%% \And
	%% Coauthor \\
	%% Affiliation \\
	%% Address \\
	%% \texttt{email} \\
}
\newbox{\orcid}\sbox{\orcid}{\includegraphics[scale=0.06]{orcid.pdf}} 
\author[]{%
	Lukas Ostrowski\thanks{\texttt{Lukas.Ostrowski@mathematik.uni-stuttgart.de} }  % 
}
\author[]{%
	Christian Rohde\thanks{\texttt{Christian.Rohde@mathematik.uni-stuttgart.de}}%
}
\affil[]{Institute of Applied Mathematics and Numerical Simulation, University of Stuttgart}
\begin{document}
\maketitle

\begin{abstract}
We consider a Navier--Stokes--Allen--Cahn (NSAC) system that governs the compressible motion of a viscous, immiscible two-phase fluid at constant temperature. Weak solutions of the NSAC system dissipate an appropriate energy functional.  Based on an 
equivalent re-formulation of the NSAC system we propose a 
fully-discrete  discontinuous Galerkin (dG) discretization that is mass-conservative,  energy-stable, and provides 
higher-order accuracy  in space and second-order accuracy in time.  The approach relies on the approach in  \cite{Giesselmann2015a} and a special splitting discretization of the derivatives of the free energy function within the Crank-Nicolson time-stepping.\\
Numerical experiments confirm the  analytical statements
and show the applicability of the approach.
\end{abstract}

% keywords can be removed

\keywords{Compressible two-phase flow \and Phase-field modelling  \and Discontinuous Galerkin method \and Energy stability}

{\bf{Mathematics Subject Classification (2020)}} {65M60  76T10   35Q30}

%%%%%%%%%%%%%%%%%%%%%%%%%%%%%%%%%%%%%%%%%%%%%%%%%%%%%%%%%%%%%%%%%%%%%%%%%%%%%%%
\section{Introduction}
\label{intro}
%%%%%%%%%%%%%%%%%%%%%%%%%%%%%%%%%%%%%%%%%%%%%%%%%%%%%%%%%%%%%%%%%%%%%%%%%%%%%%%%
%{Introduction}\label{chap:p2:intro}
For the modelling of the dynamics of a  compressible fluid that occurs in two phase states, say a liquid and a vapour one, there are two different approaches. In the sharp-interface (SI) ansatz the spatial domain is partitioned in two 
bulk domains occupied by  the respective phases. These are coupled by  transmission conditions across the interface which  separates the  bulk domains.
see e.g. \cite{MagieraRohde}.
Alternatively one can rely on diffuse-interface (DI) models. For the compressible regime we refer to  the overview  papers \cite{Anderson1998, Rohde18}. Then,
the interface has a small but  finite thickness. In this interfacial region the different phases might mix to enable a smooth transition from one phase to the other, see Figure \ref{fig:p2:DI-SI} for some illustration.
We are interested in  the subclass of  phase-field models which augment the hydromechanical  equations by an evolution equation for the artificial phase-field variable. The  phase-field variable acts as an indicator for the fluid's phase.
%%%%%%%%%%%%%%%%%%%%%%%%%%%%%%%%%%%%%%%%%%%%%%%%%%%%%%%%%%%%%%%%%%%%%%
\begin{figure}[ht]
    \centering
    \subcaptionbox{}{
    \includegraphics[width=.4\textwidth]{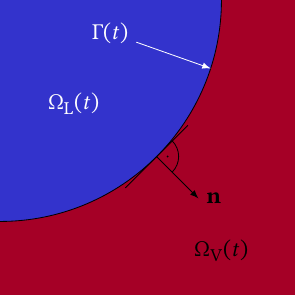}
    }
    \subcaptionbox{}{
   \includegraphics[width=.4\textwidth]{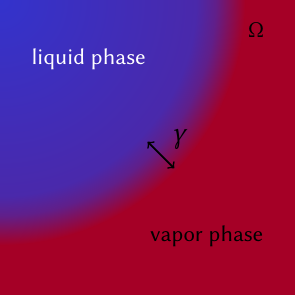}
    }
    \caption{(a) SI approach with time-dependent liquid/vapour bulk domains $\Omega_{L/V}=\Omega_{L/V}(t)$ and separating interface $\Gamma(t)$, (b)   DI approach with ${\mathcal O}(\gamma)$-interfacial width.}
    \label{fig:p2:DI-SI}
\end{figure}
%%%%%%%%%%%%%%%%%%%%%%%%%%%%%%%%%%%%%%%%%%%%%%%%%%%%%%%%%%%%%%%%%%%%%
The DI model concepts and thus in particular phase-field  models have 
the advantage that only one system of evolution equations on the entire  domain needs to be solved, without the need for explicit tracking of the interface.  On the other hand, the thermodynamically correct combination of the flow equations with the phase-field dynamics becomes intricate.  Note that 
thermodynamical consistency corresponds to an energy dissipation inequality in the case of isothermal models.\\
A common approach for phase-field modelling is the  coupling of  the  equations of 
hydrodynamics by the stress tensor to a  parabolic phase-field equation of Allen--Cahn type. Such 
Navier--Stokes--Allen--Cahn (NSAC) systems 
have  been introduced in \cite{Blesgen1999} and 
further developped in  \cite{AbelsFeireisl08,Dreyer2014,Ghosh25,HeidaMalek,Witterstein2011}.  Concerning the   well-posedness of weak and strong solutions for these mixed hyperbolic-parabolic systems we refer  to e.g.\ \cite{Feireisl2010,Kotschote2012}. The isothermal NSAC system from 
\cite{Dreyer2014} is the basis for this work, see
\eqref{eq:p2:NSAC} below. 
The authors suggest a free energy density (see \eqref{eq:defenergydensity}) that renders the approach thermodynamically consistent and --notably-- enables mass transfer  between the two phases. In turn, the free energy density 
inhibits a strong nonlinear coupling between density and phase field parameter.

Due  to its mixed-type structure, the numerics for the NSAC models is challenging. We need to resolve steep gradients in the interfacial zone  in a stable and efficient way.  Numerical schemes with artificial dissipation for stabilization can lead to problems like increase of energy or parasitic currents \cite{Coquel2005,Jamet2002}.  Therefore 
we aim to develop a \textit{higher-order and  energy-stable
scheme} to solve  the NSAC system. 
To achieve higher order in space we rely on the discontinuous Galerkin (dG) 
method \cite{BassiRebay,Hesthaven}. More precisely, to treat second-order spatial operators in the original NSAC system we propose a combination of the local 
dG method \cite{Cockburn98}  and  interior-penalty approaches  \cite{di2011mathematical}. Due to their  flexibility, dG methods are frequently used in  the context of compressible DI models
\cite{busto2021high,DKKR16,Kraenkel2018,Repossi2017,tian2016h}. To ensure energy stability in  the discrete level we follow an approach that has been first 
developped in \cite{Giesselmann2014,Giesselmann2015a}
for certain DI models. Our construction starts from a first-order  re-formulation of the NSAC system in terms of variational derivatives of the energy which can then be used as test functions in the dG formulation (see \eqref{eq:p2:mixed} below). The  
design
of tailored stabilizing inter-element fluxes guarentees then mass conservation and energy stability with respect to the spatial discretization.
To retain an at least second-order 
time discretization we utilize a Crank--Nicolson-type time-stepping. The key technical step to ensure energy stability under time discretization is a special  
implicit-explicit  discretization of the stress tensor which involves the specific free energy density discussed above.
Our final main result is then a second-order in time,
higher-order in space  dG method  for the NSAC system that obeys an energy stability 
estimate, see Definition \ref{alg:p2:fully_discrete}. Up to our knowledge, this is the first fully-discrete second-order scheme with this property. \\ 
Notably, our approach is different from the 
construction of higher-order  entropy-dissipative schemes in the realm of e.g.\ hyperbolic systems that are equipped with a convex entropy  \cite{lefloch2000highorder,tadmor03}.
The mixed-type structure of the NSAC system prevents the use of this elegant approach. But 
there are various other related works on the numerics for Navier-Stokes--Allen--Cahn  systems.   
In \cite{HE2021}, a dG method is proposed  generalizing the scalar auxiliar variable  approach which is widely used in the incompressible  realm.   The achieved energy stability is restricted to a first-order time discretization. In turn,
higher-order in time is achieved for spatially continuous approaches in \cite{Cao23}. The works  \cite{FeireislPetcuShe23} and \cite{FeireislPetcuShe21} refer to thermodynamically consistent discretizations based on  finite-volume ideas. The first dG discretization for the NSAC system has been proposed in  \cite{Kraenkel2018} but is also restricted to first-order accuracy in time. 

This work is structured as follows. In Section \ref{chap:p2:NSAC} we review the isothermal NSAC system from \cite{Dreyer2014}, that allows for phase transition dynamics.  Further, we introduce a mixed formulation with corresponding notion of weak solutions, and show that these dissipate the corresponding energy (Theorem \ref{thm:p2:energy_ineq}).  
In the subsequent Section \ref{chap:p2:DG} we propose the energy-stable dG method discussing first  semi-discretizations in space and in time, respectively (Theorem \ref{thm:p2:spatialdisc_conservation} and Theorem \ref{thm:p2:energy_ineq_discrete}).The main rigorous result is Theorem \ref{thm:p2:discrete_energy_ineq} which establishes 
mass conservation and energy stability for a higher-order fully-discrete dG method. 
In Section \ref{sec:NumExp} we present numerical experiments which demonstrate the higher-order accuracy and  confirm the theoretical findings on energy stability. In several examples we 
illustrate the capability of the model and our dG method. 
Preliminary numerical results of this work can be found in the conference paper \cite{Massa20}. It relies on the PhD 
thesis of the first author \cite{Ostrowski}.

%%%%%%%%%%%%%%%%%%%%%%%%%%%%%%%%%%%%%%%%%%%%%%%
\section{A Phase-Field Approach to Compressible Two-Phase Flow}\label{chap:p2:NSAC}
%%%%%%%%%%%%%%%%%%%%%%%%%%%%%%%%%%%%%%%%%%%%%%%%%%%%%%%%%%%%%%%%%%%%%%%%%%%%%%%%
%\chapter{A Compressible Navier--Stokes--Allen--%Cahn Model}\label{chap:p2:NSAC}
\subsection{The Compressible Navier--Stokes--Allen--Cahn System}
%%%%%%%%%%%%%%%%%%%%%%%%%%%%%%%%%%%%%%%%%%%%%%%%%%%%%% 
We consider  the dynamics of a viscous fluid at constant temperature for some time interval $[0,T]$, $T>0$, that 
occupies a domain $\Omega \subset \mathbb{R}^d, \ d\in \mathbb{N}$. The hydromechanical unknowns are the density of the fluid denoted by  $\rho=\rho(\vec{x},t) >0 $  and the velocity field $\vec{v} = \vec{v}(\vec{x},t)\in \mathbb{R}^d$.\\
The fluid is assumed to exist in two phases, a liquid phase denoted by subscript $\mathrm{L}$ and a vapor phase denoted by subscript $\mathrm{V}$.
In each phase the fluid is thermodynamically completely described by the corresponding  Helmholtz free energy density $\rho f_{\mathrm{L/V}}(\rho)$, which is assumed to be given. Since we target at a phase-field description of the two-phase dynamics we introduce as further unknown the phase-field variable        $\varphi = \varphi    (\vec{x},t) \in [0,1]$,  which is supposed to take the value $0$ ($1$)
for a pure liquid (vapour) state.\\ 
Now,  following \cite{Dreyer2014} we will introduce as governing system  the isothermal compressible Navier\---Stokes\---Allen\---Cahn (NSAC) system. With the phase-field parameter $\gamma > 0$, which relates to the interfacial width, the Helmholtz free energy density for the two-phase fluid is supposed to be given by
\begin{align}
    \rho f(\rho,\varphi,\nabla\varphi) &= h(\varphi)\rho f_\mathrm{L}(\rho)+(1-h(\varphi)) \rho f_\mathrm{V}(\rho) + \frac{1}{\gamma}W(\varphi) + \frac{\gamma}{2}|\nabla\varphi|^2. \label{eq:p2:rhof}
\end{align}
It consists of the interpolated free energy densities $\rho f_{\mathrm{L/V}}$ of the pure liquid and vapor phases. These are assumed to be convex functions with bounded derivatives.  
For the  nonlinear interpolation function $h=h(\varphi)$ and the  Van-der-Waals  energy \cite{Cahn1958} using the double well 
potential,
we make the polynomial choices 
\begin{align}\label{eq:p2:def_h}
    h(\varphi) = 3 \varphi^2 - 2 \varphi^3,\quad
    W(\varphi)= a \varphi^2(1-\varphi)^2 \quad (a>0).
\end{align}
For the numerical analysis in Section \ref{chap:p2:DG}  the specific choices in \eqref{eq:p2:def_h} are not essential but the use of (smooth) polynomials renders the notion of weak solutions below to be well-posed, see \eqref{eq:regu}. 
Let us define the mixture energy densities
\begin{align}
    \rho \psi(\varphi,\rho) &\coloneqq h(\varphi)\rho f_\mathrm{L}(\rho)+(1-h(\varphi)) \rho f_\mathrm{V}(\rho) \quad \text{ and } \\
    \rho \tilde{f}(\varphi, \rho) &\coloneqq \rho \psi(\varphi,\rho) + \frac{1}{\gamma} W(\varphi).
    \label{eq:defhomoenergy}
\end{align}
Then we can rewrite \eqref{eq:p2:rhof} in the compact form
\begin{align}\label{eq:defenergydensity}
      \rho f(\rho,\varphi,\nabla\varphi) &= \rho \psi(\varphi,\rho) + \frac{1}{\gamma}W(\varphi) + \frac{\gamma}{2} |\nabla \varphi|^2 
      = \rho \tilde{f}(\varphi,\rho) + \frac{\gamma}{2} |\nabla \varphi|^2.
\end{align}
The hydrodynamic pressure $p$ is defined 
through the Helmholtz free energy $\rho f$ using 
the Gibbs-like relation
\begin{equation}\label{eq:p2:pdef}
    p=p(\rho,\varphi,\nabla\varphi) = -\rho f(\rho,\varphi,\nabla \varphi)+\rho \frac{\partial (\rho f)}{\partial \rho}(\rho, \varphi,\nabla \varphi).
\end{equation}
Furthermore,  we introduce  the generalized chemical potential by
\begin{equation}
    \mu = \frac{1}{\gamma}W'(\varphi)+ \frac{\partial (\rho\psi)}{\partial \varphi}-\gamma \Delta \varphi,
\end{equation}
which steers the phase-field variable into equilibrium.
Additionally, we denote by $\eta>0$ the (artificial) mobility.\\
The isothermal compressible NSAC system in $\Omega \times (0,T)$ reads then as
\begin{align}
    \partial_t \rho + \div(\rho \vec{v}) &= 0,\nonumber\\ %\label{eq:p2:NSAC1}\\
    \partial_t(\rho \vec{v}) + \div(\rho \vec{v}\otimes \vec{v}+{p\vec{I}})&= \div(\vec{S}) - \gamma \div(\nabla \varphi\otimes \nabla \varphi), \label{eq:p2:NSAC}\\
    \rho \partial_t \varphi + \rho \nabla \varphi \cdot \vec{v} &= -\eta \mu. \nonumber%\label{eq:p2:NSAC3}
\end{align}
Here, we denote the  identity matrix in 
${\mathbb{R}}^{d\times d}$ by $\vec{I} $. The viscous part of the stress tensor is given by
\begin{align}\label{def:stressinterpol}
    \vec{S}=\vec{S}(\varphi,\nabla\vec{v})= \nu(\varphi) (\nabla \vec{v}+ \nabla \vec{v}^\top - \div(\vec{v})\vec{I}),
\end{align}
with an interpolation of the given  viscosities $\nu_{\mathrm{L/V}}>0$ of the pure phases:
\begin{align}
    \nu(\varphi) = h(\varphi)\nu_\mathrm{L} + (1-h(\varphi))\nu_\mathrm{V} > 0.
    \label{eq:p2:viscointerpol}
\end{align}
Note that we have set for the sake of simplicity in \eqref{def:stressinterpol} the bulk and the  shear viscosity to be equal to 1. \\
Assuming an impermeable no-slip wall and $90^\circ$ static contact angle, we impose  the boundary conditions
\begin{align}
        \vec{v}(\cdot,t) &= \vec{0}  \text{ and } 
        \nabla \varphi(\cdot,t) \cdot \vec{n} = 0   { \text{ on } \partial \Omega, t\in[0,T].} \label{eq:p2:bc2}
\end{align}
By $\vec{n} \in {\mathcal S}^{d-1}$ we denote the outer normal of $\partial \Omega$.
Additionally, the system is endowed with initial conditions
\begin{align}
\label{eq:p2:IC}
    \rho(\cdot,0) &= \rho_0, \quad \vec{v}(\cdot,0) = \vec{v}_0, \quad \varphi(\cdot,0) = \varphi_0 \quad \text{ in } \Omega,
\end{align}
using suitable functions $(\rho_0,\vec{v}_0, \varphi_0)\colon \Omega \to \mathbb{R}^+ \times \mathbb{R}^d \times [0,1]$.\\
The total energy of the system \eqref{eq:p2:NSAC} at time $t\in [0,T]$ is defined as the sum of  bulk free energy and kinetic energy, i.e.,
\begin{equation}\label{eq:p2:etot}
%\begin{aligned} 
    E(t) \coloneqq %E_\mathrm{free}(t) + E_\mathrm{kin}(t) \\
    %
   % &
 %&  = 
 \int_\Omega \rho(\vec{x},t) f\big(\rho(\vec{x},t),\varphi(\vec{x},t),\nabla \varphi(\vec{x},t)\big) + \frac{1}{2} \rho(\vec{x},t) |\vec{v}(\vec{x},t)|^2 \ddd\vec{x}.
%\end{aligned}
\end{equation}
Introducing the  bilinear form 
\[
B[\varphi;\cdot, \cdot]  : % =
%\left\{
%\begin{array}{ccc}
%\left(H^1_0(\Omega)\right)^d \times  \left(H^1_0(\Omega)\right)^d  &\to& \mathbb{R}\\
(\vec{v},\vec{X})    %&
\mapsto %&
\ds \int_\Omega    {\bf S} (\varphi;\nabla \vec{v}) : \nabla \vec{X}   \dd\vec{x}
%\end{array}
%\right.
\]
for some  $\varphi$ with $0\le \varphi \le 1$,
it can be readily shown that smooth solutions of 
\eqref{eq:p2:NSAC} that satisfy \eqref{eq:p2:bc2}    in $(0,T) $ fulfill  the energy dissipation law
\begin{equation}\label{energydecay}
\frac{\dd}{{\rm d}t}E(t) =-\int_\Omega \frac{\eta}{\rho} \mu^2 \dd\vec{x} - B[\varphi;\vec{v},\vec{v}] \leq 0.
\end{equation}

\begin{rem} %\
    \begin{enumerate}
     \item The inequality \eqref{energydecay} renders the 
     the NSAC system \eqref{eq:p2:NSAC} thermodynamically consistent. We will prove  the analogous energy dissipation 
     rate for a mixed formulation of \eqref{eq:p2:NSAC} 
    in Theorem \ref{thm:p2:energy_ineq} below.   
     % \item
        %The phase-field $\varphi$ is in general an artificial variable, however, for the NSAC system \eqref{eq:p2:NSAC}, it can be viewed as a mass fraction $\varphi = \frac{m_\mathrm{V}}{m},$ with the mass $m_\mathrm{V}$ of the vapor constituent and the total mass $m$ of the fluid.
        \item
        The polynomial function $h$  from \eqref{eq:p2:def_h} with the interpolation property  $h(0)=0$, $h(1)=1$ satisfies in particular $h'(0) = h'(1) \allowbreak \neq 0$. This guarantees that \eqref{eq:p2:NSAC} allows for physical meaningful equilibria like a static single-phase equilibrium $\rho =  const., \vec{v} = \boldsymbol{0}, \varphi \equiv 0$. If $h'(0)\neq 0$ holds,  then the right hand side of the phase\-field equation $\eqref{eq:p2:NSAC}_3$  would not vanish.
        In \cite{Dreyer2014} it has been shown that the choices \eqref{eq:p2:def_h} are compatible with the sharp-interface limit $\gamma\to 0$ in the sense that the limit obeys well-known transmission conditions for phase transition. 
 %   \end{enumerate}
%\end{rem}
%
%
%\begin{rem}[Wellposedness of the NSAC system]
 \item % In the sequel we are interested in the numerical approximation of  certain weak solutions of \eqref{eq:p2:NSAC}, \eqref{eq:p2:bc2}, \eqref{eq:p2:IC}.
 Up to our knowledge there are no
  results on the well-posedness of   solutions for \eqref{eq:p2:NSAC}. But 
    there are results on the existence of solutions to similar systems which mainly differ in the scaling of the second-order operators. The scalings are needed in order to ensure the boundedness of the 
    phase-field variable.
    In \cite{Feireisl2010} the existence of global-in-time weak solutions is proven, and  in \cite{Kotschote2012} the authors prove the existence and uniqueness of local strong solutions.
  %  Our notion of weak solutions differs from \cite{Feireisl2010}. It will be  introduced in Section \ref{sec:p2:mixedenergy} for a mixed-formulation of \eqref{eq:p2:NSAC}.
    \end{enumerate}
\end{rem}

%
%Diffuse-interface models need to have a meaningful limit, if we let the interface width tend to zero. In \autoref{sec:p2:SI_limit} we briefly comment on this so-called \emph{sharp-interface limit}.

%In the subsequent \autoref{sec:p2:surftens}, we investigate the surface tension in the system. The Navier\--Stokes\--Kortweg (NSK) nomenclature{NSK}{Navier--Stokes--Kortweg} model is a different diffuse-interface model, where the density acts like a phase\-/field. For the NSK model it is known that, for most scalings, the surface tension vanishes in the sharp-interface limit \cite{Dreyer2012}. However, we show that for the NSAC system this is not the case and the SI limit system still has a surface tension contribution.

%%%%%%%%%%%%%%%%%%%%%%%%%%%%%%%%%%%%%%%%%%%%%%%%%%%%%%%%%%%%%%%%%%%%%%%%%%%%%%%%
\subsection{A Mixed Formulation of the NSAC System and Energy Stability}\label{sec:p2:mixedenergy}
Our  dG method    will rely on a mixed, non-conservative formulation  of the NSAC system \eqref{eq:p2:NSAC} 
which will be proposed in this section.
Notably, the mixed system is of first order if the viscous 
stress tensor is neglected.
Further we introduce a notion of weak solutions, and it is 
shown  in Theorem \ref{thm:p2:energy_ineq}   that  weak solutions  dissipate an energy functional that corresponds to $E$ from \eqref{eq:p2:etot}. 

First, using  the free energy density $\rho f$ from \eqref{eq:p2:rhof} with the homogeneous  part  $\rho \tilde{f}$ from \eqref{eq:defhomoenergy},
% and $\rho\psi$ . 
we introduce auxiliary variables
\begin{align}\label{eq:auxilary}
    \vec{\sigma} &= \nabla \varphi, \quad 
    \mu = \frac{\partial (\rho \tilde{f})}{\partial \varphi} - \div(\gamma\vec{\sigma}), \quad 
    \tau  = \frac{\partial (\rho \tilde{f})}{\partial \rho} + \frac{1}{2} |\vec{v}|^2.
\end{align}
Note that $\mu$ and $\tau$ are the (variational) derivatives of  the  integrand  of the  total energy in \eqref{eq:p2:etot}   with respect to $\varphi$ and $\rho$. 
With these, we rewrite the NSAC system \eqref{eq:p2:NSAC} into a mixed formulation and search for 
\begin{equation}\label{defU}
\vec{U} \coloneqq (\rho,\vec{v}, \varphi, \mu,\tau,\vec{\sigma} ),
\end{equation}
that satisfies 
\renewcommand{\ds}{\displaystyle}
\begin{equation}\label{eq:p2:mixed}
 \begin{array}{rcl} 
  \ds  \partial_t \rho + \div(\rho\vec{v}) &= &0, \\
  \ds   \rho \partial_t \vec{v} + \div(\rho \vec{v}\otimes \vec{v}) -\div(\rho \vec{v})\vec{v} - \frac{1}{2}\rho\nabla|\vec{v}|^2 - \mu \nabla \varphi+\rho \nabla \tau
    &= &\div(\vec{S}),  \\
 \ds    \partial_t \varphi + \nabla \varphi\cdot \vec{v}  &=& \ds - \eta \frac{\mu}{\rho},\\
   \ds     \mu -\frac{\partial (\rho \tilde{f})}{\partial \varphi}+\gamma \div(\vec{\sigma})&=& 0,  \\
 \ds   \tau - \frac{\partial (\rho \tilde{f})}{\partial \rho} - \ds \frac{1}{2}|\vec{v}|^2 &=&0, \\
  \vec{\sigma} - \nabla \varphi&=&0.
\end{array}
\end{equation}
% give all equations tags.
%\noeqref{eq:p2:mixed3}\noeqref{eq:p2:mixed4}\noeqref{eq:p2:mixed5}
%
In terms of the mixed-system variables the total energy $E$ from \eqref{eq:p2:etot} is expressed as 
\begin{equation}\label{hatenergy}
\hat E(t) :=\int_\Omega \hspace*{-0.015cm}\rho(\vec{x},t) \tilde f\big(\rho(\vec{x},t),\varphi(\vec{x},t)\big) +   \hspace*{-0.015cm} \frac{\gamma}{2} |\vec{\sigma}(\vec{x},t)|^2 +   \hspace*{-0.015cm}   \frac{1}{2} \rho(\vec{x},t) |\vec{v}(\vec{x},t)|^2 \dd \vec{x}.
\end{equation}
The initial conditions  \eqref{eq:p2:IC} and the boundary conditions
\begin{align}
        \vec{v}(\cdot,t) &= \vec{0}  \text{ and } 
       \vec{\sigma}(\cdot,t) \cdot \vec{n} = 0   { \text{ on } \partial \Omega, t\in[0,T]} \label{eq:p2:bc3}
\end{align}
complete the system.
Note that  \eqref{eq:p2:mixed} involves as primary unknown the velocity as primitive variable. The non-conservative re-formulation of the momentum equations is needed to prove the discrete energy stability statements in Section 
\ref{chap:p2:DG}.\\% whereas we succeed to keep conservation of mass on the discrete level.\\ 
Let $L^2(\Omega)$ and  $H^k(\Omega)$, $k\in \mathbb{N}$, denote  the standard Lebesgue and  Sobolev spaces, respectively.  To cope with the boundary conditions \eqref{eq:p2:bc3} in a weak function space setting  we define  
\begin{equation}
\label{eq:defspace2}
\begin{array}{rcl}
    H_0^1(\Omega) &\coloneqq& \{ \phi \in H^1(\Omega) \colon \phi\vert_{\del \Omega} = 0 \},\\[1.1ex]
    H_{\vec{n}}^1(\Omega) &\coloneqq& \{ \vec{\phi} \in \left(H^1(\Omega)\right)^d \colon \vec{\phi}\vert_{\del \Omega} \cdot \vec{n} = 0 \},
\end{array}
\end{equation}
and the spatial solution space as
\[
{\mathcal V}=     H^1(\Omega) \times   (H_0^1(\Omega))^d \times H^1(\Omega)  
\times H^1(\Omega)  \times H^1(\Omega) \times     H_{\vec{n}}^1(\Omega).
\]
 We  proceed to define a weak solution. In the sequel we use abbreviations  like e.g.,~$\rho(t)$  instead of  $\rho(\cdot,t)$  for the density. 
\begin{Definition}[Weak solution of the mixed formulation \eqref{eq:p2:mixed}]\label{def:weaksol}
    \ \\
  The  function  $ \Solmixed:= (\rho,\vec{v},\varphi,\mu,\tau,\vec{\sigma})\in C^0([0,T];{\mathcal V} )$  with
  \begin{equation}\label{eq:regu}
  \begin{array}{c}\rho(t) > 0,\,   \varphi(t) \in [0,1] \text{ a.e.},\, %\\[1.1ex]
  \rho^{-1}(t), \,  (\rho \tilde f)(t), \,   \rho(t) {|\vec{v}|(t)}^2  \in L^2(\Omega),  \\[1.1ex]
  \del_t \rho(t),    \del_t \varphi(t)   \in L^2(\Omega),         \del_t\vec{v}(t),{\del_t\vec{\sigma}}(t) \in  (L^2(\Omega))^d
  \end{array}
  \end{equation}
for all $t \in [0,T]$  is called a \textit{weak solution of the initial boundary value problem for  \eqref{eq:p2:mixed}}
if it satisfies the initial conditions in \eqref{eq:p2:IC} and if  we have  in  $ (0,T)$ the relations
    \begin{equation}
    \begin{array}{rcl}
        0 &=& \ds \int_\Omega (\del_t \rho + \div(\rho\vec{v})) \psi \dd \vec{x},
       \\[1.7ex]
        0 &=& \ds \int_\Omega \Big(\vphantom{\frac{1}{2}} \rho \del_t \vec{v} + \div(\rho\vec{v}\otimes \vec{v}) - \div(\rho\vec{v})\vec{v}   \\[1.5ex]
        &&\ds \phantom{\int_\Omega} + \rho \nabla \tau - \mu\nabla \varphi -\frac{1}{2}\rho \nabla|\vec{v}|^2\Big) \cdot \vec{X} \dd \vec{x} + B[\varphi;\vec{v}(t), \vec{X}] ,\\[1.7ex]
        0 &=& \ds \int_\Omega \left( \del_t \varphi + \vec{v}\cdot \nabla \varphi + \eta \frac{\mu}{\rho} \right) \Theta \dd \vec{x}, \\[1.7ex]
        0 &=& \ds \int_\Omega \Big( \mu-\frac{\del \rho \tilde{f}}{\del \varphi}(\rho,\varphi)+\gamma \div(\vec{\sigma}) \Big) \chi \dd \vec{x}, \\[1.7ex]
        0 &=& \ds \int_\Omega \Big( \tau - \frac{\del \rho \tilde{f}}{\del \rho}(\rho,\varphi)-\frac{1}{2}|\vec{v}|^2 \Big) \zeta \dd \vec{x},\\[1.7ex]
        0 &=& \ds \int_\Omega (\vec{\sigma}-\nabla \varphi) \cdot \vec{Z} \dd \vec{x}
    \end{array}   \label{eq:p2:weak}
    \end{equation}for all test functions  $(\psi,\vec{X},\Theta,\chi,\zeta,\vec{Z}) \in \mathcal{V}$.
\end{Definition}
The conditions in \eqref{eq:regu} ensure not only the existence of all integrals in \eqref{eq:p2:weak} but 
also in the subsequent energy notions. Recall that weak solutions of the original system \eqref{eq:p2:NSAC} satisfy a  total energy inequality, see \cite{Giesselmann2014}. We show that weak solutions of \eqref{eq:p2:mixed} dissipate the energy $\hat E$ from \eqref{hatenergy} with the corresponding   entropy production.

\begin{thm}[Energy stability of the weak solution]\label{thm:p2:energy_ineq}\ \\
Let $\vec{U}= (\rho,\vec{v},\varphi, \mu,\tau,\vec{\sigma})$  be a weak solution to the initial boundary value problem for \eqref{eq:p2:mixed}.  Then for all $t \in (0,T)$, the following energy inequality holds:
    \begin{equation}
    \begin{aligned} \label{eq:p2:energy_ineq}
        \frac{\dd}{\dd t} \hat E(t) &
         =\frac{\dd}{\dd t} \left(\int_\Omega \rho \tilde f(\rho,\varphi) +    \frac{\gamma}{2} |\vec{\sigma}|^2 +    \frac{1}{2} \rho |\vec{v}|^2 \ddd\vec{x}\right)  \\
        &=-\int_\Omega \frac{\eta}{\rho} \mu^2 \dd\vec{x} - B[\varphi;\vec{v},\vec{v}] \leq 0.
    \end{aligned}
    \end{equation}
\end{thm}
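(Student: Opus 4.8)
The plan is to run the standard ``differentiate the energy and use the variational derivatives as multipliers'' argument, being careful that it is the direct-differentiation route --- rather than literally inserting time derivatives as test functions into \eqref{eq:p2:weak} --- that is appropriate here, since $\partial_t\rho$ and $\partial_t\vec\sigma$ lie only in $L^2(\Omega)$ and are \emph{not} admissible test functions for the fifth and sixth lines of \eqref{eq:p2:weak}. So first I would differentiate $\hat E$ from \eqref{hatenergy} in $t$ and move $\frac{\dd}{\dd t}$ inside the spatial integral (legitimate by the time-regularity in \eqref{eq:regu}), writing $\frac{\dd}{\dd t}\hat E = I + II + III$ with $I = \int_\Omega \partial_t\big(\rho\tilde f(\rho,\varphi)\big)\,\dd\vec x$, $II = \gamma\int_\Omega \vec\sigma\cdot\partial_t\vec\sigma\,\dd\vec x$, and $III = \tfrac12\int_\Omega \partial_t\big(\rho|\vec v|^2\big)\,\dd\vec x$.

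Next I would reduce $I$ and $II$. Because the test functions $\chi,\zeta,\vec Z$ in the last three lines of \eqref{eq:p2:weak} range over all of $H^1(\Omega)$, $H^1(\Omega)$, $H^1_{\vec n}(\Omega)$, those relations hold pointwise a.e.: $\frac{\partial(\rho\tilde f)}{\partial\varphi} = \mu + \gamma\div\vec\sigma$, $\frac{\partial(\rho\tilde f)}{\partial\rho} = \tau - \tfrac12|\vec v|^2$, and $\vec\sigma = \nabla\varphi$. The chain rule then gives $I = \int_\Omega\big(\tau-\tfrac12|\vec v|^2\big)\partial_t\rho + \big(\mu+\gamma\div\vec\sigma\big)\partial_t\varphi\,\dd\vec x$; differentiating $\vec\sigma=\nabla\varphi$ in time and integrating by parts, with the boundary term killed by $\vec\sigma\cdot\vec n = 0$ from \eqref{eq:p2:bc3}, gives $II = -\gamma\int_\Omega \div\vec\sigma\,\partial_t\varphi\,\dd\vec x$; and $III = \int_\Omega \tfrac12|\vec v|^2\partial_t\rho + \rho\vec v\cdot\partial_t\vec v\,\dd\vec x$. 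The two $\gamma\,\div\vec\sigma\,\partial_t\varphi$ terms cancel and the two $\tfrac12|\vec v|^2\partial_t\rho$ terms cancel, leaving
\[
\frac{\dd}{\dd t}\hat E = \int_\Omega \tau\,\partial_t\rho\,\dd\vec x + \int_\Omega \mu\,\partial_t\varphi\,\dd\vec x + \int_\Omega \rho\vec v\cdot\partial_t\vec v\,\dd\vec x.
\]

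Then I would eliminate the remaining time derivatives using the evolution equations: test the mass balance (line 1 of \eqref{eq:p2:weak}) with $\psi=\tau$, the Allen--Cahn equation (line 3) with $\Theta=\mu$, and the momentum balance (line 2) with the admissible choice $\vec X = \vec v(t)\in(H^1_0(\Omega))^d$, and add. The $\rho\nabla\tau$ contributions cancel in pairs (after one integration by parts, the boundary term vanishing since $\vec v=\vec{0}$ on $\partial\Omega$), the $\mu\nabla\varphi$ contributions cancel in pairs, and the genuinely convective terms cancel via the pointwise identity $\div(\rho\vec v\otimes\vec v)\cdot\vec v = |\vec v|^2\div(\rho\vec v) + \tfrac12\rho\vec v\cdot\nabla|\vec v|^2$, which turns $-\int_\Omega\div(\rho\vec v\otimes\vec v)\cdot\vec v + \int_\Omega\div(\rho\vec v)|\vec v|^2 + \tfrac12\int_\Omega\rho\vec v\cdot\nabla|\vec v|^2$ into $0$. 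What survives is exactly $\frac{\dd}{\dd t}\hat E = -\int_\Omega \tfrac\eta\rho\mu^2\,\dd\vec x - B[\varphi;\vec v,\vec v]$, and the right-hand side is $\le 0$ since $\eta>0$, $\rho>0$, and $B[\varphi;\vec v,\vec v] = \int_\Omega \vec S(\varphi;\nabla\vec v):\nabla\vec v\,\dd\vec x \ge 0$ because $\nu(\varphi)>0$, just as for \eqref{energydecay}.

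The algebraic cancellations are exactly the familiar ``kinetic plus free energy'' identity for Navier--Stokes--Allen--Cahn systems; the real work --- and the main obstacle --- is to justify every step within the regularity class \eqref{eq:regu}. One must verify: the interchange of $\frac{\dd}{\dd t}$ with $\int_\Omega$ and the chain rule for $t\mapsto\rho\tilde f(\rho(t),\varphi(t))$ (using $\partial_t\rho,\partial_t\varphi\in L^2$, the $C^0$-in-time regularity, the bound $0\le\varphi\le1$, and the bounded first derivatives and convexity of $f_{\mathrm{L/V}}$ --- most cleanly via a mollification in time); the passage from the weak identities to their a.e.\ counterparts (density of $H^1(\Omega)$ in the relevant Lebesgue space); the integrations by parts (legitimate because $\vec v\in(H^1_0(\Omega))^d$, $\vec\sigma\in H^1_{\vec n}(\Omega)$, $\tau\in H^1(\Omega)$); and the integrability of all nonlinear products, such as $\tfrac1\rho\mu^2$, $\rho\tilde f$, $\rho|\vec v|^2$ and $\rho\vec v\cdot\nabla\tau$, for which the somewhat unusual requirements $\rho^{-1},\,(\rho\tilde f),\,\rho|\vec v|^2\in L^2(\Omega)$ in \eqref{eq:regu}, together with the Sobolev embeddings $H^1(\Omega)\hookrightarrow L^q(\Omega)$, are precisely what is needed.
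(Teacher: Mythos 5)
Your proposal is correct and takes essentially the same route as the paper's proof: differentiate $\hat E$ under the integral, use the weak relations for $\mu$, $\tau$ and $\vec{\sigma}$ in \eqref{eq:p2:weak} to rewrite the derivative as $\int_\Omega \tau\,\del_t\rho + \mu\,\del_t\varphi + \rho\vec{v}\cdot\del_t\vec{v}\,\dd\vec{x}$, then eliminate the time derivatives via the evolution equations (tested with $\tau$, $\mu$, $\vec{v}$), integrate by parts with the boundary conditions, and cancel the convective terms with the identity $\div(\rho\vec{v}\otimes\vec{v})\cdot\vec{v}=\div(\rho\vec{v})|\vec{v}|^2+\tfrac12\rho\vec{v}\cdot\nabla|\vec{v}|^2$. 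The only deviation is a justification detail: where the paper formally takes $\chi=\del_t\varphi$, $\zeta=\del_t\rho$ (which are only in $L^2$) as test functions and differentiates $\eqref{eq:p2:weak}_6$ in time with $\vec{Z}=\vec{\sigma}$, you obtain the corresponding pointwise a.e.\ identities by density and multiply, which is a slightly cleaner rendering of the same step.
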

%%%%%
As expected, the entropy production   is driven by the chemical potential and the viscous forces. To motivate  the 
energy analysis for the numerical method in Section \ref{chap:p2:DG}  we provide the proof of the theorem.
%%%%%%
\begin{proof}(of Theorem \ref{thm:p2:energy_ineq}) 
    In a straightforward way we compute:
    \begin{align*}
        \frac{\dd}{\dd t} \hat E(t) =& \frac{\dd}{\dd t} \left(\int_\Omega \rho \tilde f(\rho,\varphi) +  \frac{\gamma}{2} |\vec{\sigma}|^2 +   \frac{1}{2}\rho |\vec{v}|^2 \dd\vec{x} \right) \\
        =& \int_\Omega  \del_t \rho\frac{\partial (\rho \tilde f)}{\partial \rho}  +
       \del_t \varphi \frac{\partial (\rho \tilde f)}{\partial \varphi}  +\gamma\del_t \vec{\sigma\!} \cdot \vec{\sigma}
            + \frac{1}{2}\del_t \rho  |\vec{v}|^2     + (\rho\vec{v})\cdot \del_t \vec{v} \dd\vec{x}\\
     =& \int_\Omega  \del_t \rho \Big(\frac{\partial (\rho \tilde f)}{\partial \rho}
        - \frac{1}{2} |\vec{v}|^2\Big)   
             +\del_t \varphi \Big(
             \frac{\partial (\rho \tilde f)}{\partial \varphi} - \gamma \div \vec{\sigma}   \Big)\\[1.5ex] 
    &{}  \hspace*{13em} + \del_t \rho  |\vec{v}|^2     + (\rho\vec{v})\cdot \del_t \vec{v} \dd\vec{x}.
        %
      %  &+ \int_{\partial \Omega} \varphi_t(\gamma \nabla \varphi \cdot \vec{n})  \ddd s. 
        %
\end{align*}        
For the last line we used the 
 time derivative of $\eqref{eq:p2:weak}_6$
with $\vec{Z} = \vec\sigma \in H^1_{\vec{n}} (\Omega) $.  The split of the kinetic energy term allows to use 
$\eqref{eq:p2:weak}_{4,5}$  with  $\chi = \del_t \varphi, \zeta = \del_t \rho$ resulting in 
\begin{align*}
       { \frac{\dd}{\dd t} \hat E(t)} &=
   \int_\Omega  \del_t \rho \tau  +       \del_t \varphi \mu - \div(\rho \vec{v}) |\vec{v}|^2     +  \vec{v}\cdot (\rho\del_t \vec{v} )
           \dd\vec{x}\\
          &=     \int_\Omega  \del_t \rho\tau  +       \del_t \varphi \mu - \div(\rho \vec{v}) |\vec{v}|^2  \\ 
& \hspace*{1.9em}- \vec{v} \cdot  ( \div(\rho\vec{v}\otimes \vec{v})) + \div(\rho \vec{v})|\vec{v}|^2  \\
&\hspace*{1.9em}-\rho\nabla \tau \cdot \vec{v}
+ \mu \nabla \varphi \cdot \vec{v}   + \frac12\rho\nabla |\vec{v}|^2 \cdot\vec{v}  \dd\vec{x}\\[1.2ex] 
&\phantom{=}{} -B[\varphi;\vec{v}, \vec{v}].      
\end{align*}       
Here we have used the weak relation for the velocity $\vec{v}$ in $\eqref{eq:p2:weak}$. In the next step 
we plug in the time derivatives of $\rho$ and $\varphi$ and use the choice of the ansatz spaces to deduce 
\begin{align*}
    \ds \frac{\dd}{\dd t} \hat E(t)
          &=     \int_\Omega  -\div (\rho \vec{v}) \tau  -  (\vec{v}\cdot \nabla \varphi)\mu  -  \eta \frac{\mu^2}{\rho} - \div(\rho \vec{v}) |\vec{v}|^2  \\ 
& \hspace*{1.9em}- \vec{v} \cdot  (\div( \rho\vec{v}\otimes \vec{v})) + \div(\rho \vec{v})|\vec{v}|^2 \\
& \hspace*{1.9em} -\rho\nabla \tau \cdot \vec{v}
+ \mu \nabla \varphi \cdot \vec{v}   + \frac12\rho\nabla |\vec{v}|^2 \cdot\vec{v}  \dd\vec{x}\\
&\phantom{=}{} -B[\varphi;\vec{v}, \vec{v}] \\
     &= \int_\Omega    -  \eta \frac{\mu^2}{\rho} \dd \vec{x}    -B[\varphi;\vec{v}, \vec{v}]. 
\end{align*}  
For the last line we used  $\vec{v}\in H_{\vec{n}}^1(\Omega)$.
\qed
\end{proof}

%%%%%%%%%%%%%%%%%%%%%%%%%%%%%%%%%%%%%%%%%%%%%%%

%%%%%%%%%%%%%%%%%%%%%%%%%%%%%%%%%%%%%%%%%%%%%%%
\section{An Energy-Stable dG Method for the NSAC  System}\label{chap:p2:DG}
%%%%%%%%%%%%%%%%%%%%%%%%%%%%%%%%%%%%%%%%%%%%%%%%%%%%%%%%%%%%%%%%%%%%%%%%%%%%%%%%
%\section{An Energy-Consistent Discontinuous %Galerkin Scheme for NSAC}\label{chap:p2:DG}

The derivation of our scheme to solve the system \eqref{eq:p2:NSAC} is based on \cite{Giesselmann2014}. In contrast to \cite{Kraenkel2018}, where an energy-consistent discontinuous Galerkin (dG) method for a similar model has been derived, we obtain a numerical scheme with second order accuracy in time, instead of order one. The resulting dG method is by construction mass conservative.

Based on the mixed formulation from Section \ref{sec:p2:mixedenergy}, we suggest in Section  \ref{sec:p2:spatial} a  spatial semi-discretization and in Section \ref{sec:p2:temporal} the temporal semi-discretization. These two are combined in Section \ref{sec:p2:fully_discrete} to obtain the final fully-discrete scheme,  given in Definition \ref{alg:p2:fully_discrete}.

%%%%%%%%%%%%%%%%%%%%%%%%%%%%%%%%%%%%%%%%%%%%%%%%%%%%%%%%%
\subsection{A Semi-Discrete dG Method for the  NSAC System}\label{sec:p2:spatial}
%%%%%%%%%%%%%%%%%%%%%%%%%%%%%%%%%%%%%%%%%%%%%%%%%%%%%%%%%%

For the  spatial semi-discretization of \eqref{eq:p2:mixed} we   introduce mesh notations.
Let $\mathcal{T}$ be a conforming triangulation of $\Omega$. That means $\mathcal{T}=\{T\}$ is a finite set of $d$-simplices  such that
\begin{enumerate}
    \item $T \in \mathcal{T}$ implies $T$ is an open set,
    \item for any $T_1,T_2 \in \mathcal{T}$ we have $\bar{T}_1 \cap \bar{T}_2$ is a  sub-simplex of both, $\bar{T}_1$ and $\bar{T}_2$,
    \item $\bigcup_{T\in\mathcal{T}} \bar{T} = \bar{\Omega}$.
\end{enumerate}
The {mesh size} $h>0$ of $\mathcal{T}$ is defined as
\begin{align}\label{eq:p2:meshsizedef}
    h\coloneqq \max_{T \in \mathcal{T}} h_T,
\end{align}
where $h_T$ denotes the diameter of the  simplex $T$.
%%%%%%
%\begin{definition}[Interface, boundary face]\label{def:interfaces}
    For some mesh $\mathcal{T}$, we call a subset $e$ of $\bar{\Omega}$ a \emph{mesh face}, if $e$ has positive $(d-1)$-dimensional Hausdorff measure and if exactly one of the following conditions is fulfilled:
    \begin{enumerate}
        \item  There are  two distinct mesh elements $T_1,T_2 \in \mathcal{T}$ such that $e = T_1\cap T_2$. In this case we call $e$ an \emph{interface}.
        \item
       There exists $T\in \mathcal{T}$ with $e=\del T\cap \del \Omega.$ Then we call $e$ a \emph{boundary face}.
    \end{enumerate}
%\end{definition}
%%%%%%%%%%%
Let $\mathcal{E}$ denote  the set of interfaces of the triangulation $\mathcal{T}$.\\ 
%Given a mesh face $e$, we denote $e \in \mathcal{E}$ if $e$ is in the interior of $\Omega$ and $e \in \del \Omega$ if $e$ is a boundary face.
We define further for $k \in \mathbb{N}$ the broken Sobolev space
    \begin{align}\label{eq:p2:broken_sobolev_def}
        H^k(\mathcal{T})\coloneqq \left\{ \phi \in L^2(\Omega) \colon \forall \, T \in \mathcal{T}, \phi\vert_T \in H^k(T) \right\},
    \end{align}
    and similarly the sets $H^1_0(\mathcal{T})$ and $H_{\vec{n}}^1(\mathcal{T})$ based on \eqref{eq:defspace2}.\\
    In order to use functions defined in broken spaces restricted to the interface set $\mathcal{E}$ of the triangulation, we define the trace space
    \begin{align}\label{eq:p2:trace_space_def}
        \operatorname{Tr}(\mathcal{E})\coloneqq \prod_{T\in \mathcal{T}} L^2(\del T).
    \end{align}
Equipped with these  notations and based on the notion of weak solutions from Definition \ref{def:weaksol} we are now able to present a semi-discrete method.\\ 
For the  ansatz and test spaces of the  discontinuous Galerkin method let 
\newcommand{\Vzero}{{V}_{h,0} }
\newcommand{\Vnormal}{{V}_{h,\vec{n}} }
\begin{equation}
\begin{array}{c}
    V_h \coloneqq \left\{ u \in L^2(\Omega) \colon  u\vert_T\in \mathbb{P}^k\,\,   \forall \, T \in \mathcal{T} \right    \}, \\[1.1ex]
    %\oset{\circ}{V}_h 
    \Vzero  \coloneqq V_h \cap H_0^1(\mathcal{T}), \quad 
    %\oset{\vec{n}}{V}_h 
    \Vnormal  \coloneqq (V_h)^d \cap H_{\vec{n}}^1(\mathcal{T}),
\end{array}
\end{equation}
where $\mathbb{P}^k$ is the space of polynomials up to degree $k$.
Further, let 
\begin{equation}\label{functionspace}
\mathcal{V}_h = V_h \times (\Vzero)^d\times V_h \times V_h \times V_h \times \Vnormal.
\end{equation}
%\begin{definition}[Jump and average operators]\label{def:p2:jump_avg}
For the discretization of the initial data from \eqref{eq:p2:IC} we introduce 
the mapping
$
\Pi_h :  {\mathcal V} \to {\mathcal V}_h
$
as the $L^2$-projection to the ansatz space ${\mathcal V}_h$. Now, given  initial functions  $\rho_0 \in H^1(\Omega)$, 
$\vec{v}_0 \in {(H^1_0(\Omega))}^d$,  $\varphi_0\in H^1(\Omega)$ with ${\sigma}_0 := \nabla \varphi_0 \in H^1_{\vec{n}}(\Omega)$  in \eqref{eq:p2:IC} and 
\begin{equation}
\label{iniremainder}
\mu_0:=  \frac{\partial (\rho \tilde{f})}{\partial \varphi}(\varphi_0,\rho_0) -\gamma \div(\vec{\sigma}_0),\quad 
\tau_0 :=   \frac{\partial (\rho \tilde{f})}{\partial \rho}(\varphi_0,\rho_0) +  \frac{1}{2}|\vec{v}_0|^2,
\end{equation}
we define the initial data in ${\mathcal V}_h$ by
\begin{equation}\label{inidatadiscrete}
\vec{U}_{0,h} =  (\rho_{0,h}, \vec{v}_{0,h}, \varphi_{0,h}, \mu_{0,h}, \tau_{0,h}, \vec{\sigma}_{0,h}) := \Pi_h (\rho_{0}, \vec{v}_{0}, \varphi_{0}, \mu_{0}, \tau_{0,h}, \vec{\sigma}_{0}).
\end{equation}
Note that $\vec{U}_{0,h}$ is well-defined due to the regularity of $\rho\tilde f$.\\
We proceed with  jump and average operators for some scalar-valued function 
 $\Phi \in V_h$ 
and some vector-valued function $\vec{u} \in {(V_h)}^d$. 
    Let $T_1,T_2$ be two mesh  elements  in $\mathcal T$ with a common facet and let $T\in \mathcal T$ be a 
      simplex with some boundary face. Then we define 
      %addition, let $T$ be a mesh element with boundary facet $e_b = \partial T \cap \partial \Omega$. Then,
    %
    \begin{equation}\label{def:p2:jump_avg}
    \begin{array}{rcl}
        \avg{\Phi}_{e}   &\coloneqq& 
         \left\{ \begin{array}{c@{\quad :\quad}l}  
                 \hspace*{1.85em}(\Phi\vert_{T_1} + \Phi\vert_{T_2})/2   \hspace*{1.9em}&  e\in  \mathcal{E} \cap \bar T_1 \cap  \bar T_2,\\ 
              \Phi\vert_T & e\in \partial \Omega \cap \bar T,
        \end{array}
        \right.
        \\[2.1ex]
        \avg{\vec{u}}_{e} &\coloneqq&
         \left\{ \begin{array}{c@{\quad :\quad}l}  
                 \hspace*{1.9em}  (\vec{u}\vert_{T_1} + \vec{u}\vert_{T_2})/2  \hspace*{1.9em} &  e\in  \mathcal{E} \cap \bar T_1 \cap \bar T_2,\\ 
              \vec{u}\vert_T & e\in \partial \Omega \cap \bar T,
        \end{array}
        \right.
        \\[2.1ex]
        \jump{\Phi}_{e} &\coloneqq& 
         \left\{ \begin{array}{c@{\quad :\quad}l}  
                \hspace*{1.2em} \Phi\vert_{T_1} \vec{n}_{T_1} + \Phi\vert_{T_2} \vec{n}_{T_2}   \hspace*{1.2em} &  e\in  \mathcal{E} \cap \bar T_1 \cap \bar T_2,\\ 
              \jump{\Phi}_{e_b} & e\in \partial \Omega \cap \bar T,
        \end{array}
        \right.        
        \\[2.1ex]
        \jump{\vec{u}}_{e} &\coloneqq &
        \left\{ \begin{array}{c@{\quad :\quad}l} 
         \hspace*{0.5em} \vec{u}\vert_{T_1} \cdot \vec{n}_{T_1} + \vec{u}\vert_{T_2} \cdot \vec{n}_{T_2}   \hspace*{0.5em}&  e\in  \mathcal{E} \cap \bar T_1 \cap \bar T_2,\\ 
         \vec{u}\vert_{T} \cdot \vec{n}_{T} & e\in \partial \Omega \cap \bar T,
        \end{array}
        \right.       
        \\[2.1ex]
        \jump{\vec{u}}_{\otimes,e} &\coloneqq&
         \left\{ \begin{array}{c@{\quad :\quad}l}  
          \vec{u}\vert_{T_1} \otimes \vec{n}_{T_1} + \vec{u}\vert_{T_2} \otimes \vec{n}_{T_2}
         &  e\in  \mathcal{E} \cap \bar T_1 \cap \bar T_2,\\ 
      \vec{u}\vert_{T} \otimes \vec{n}_{T} & e\in \partial \Omega \cap \bar T.
        \end{array}
        \right.      
    \end{array}
    \end{equation}
    We omit the subscript $e$ whenever no confusion can arise and use for some subset 
    $ {\mathcal E}' \subseteq \mathcal E \cap \partial \Omega$    the notation 
    \[
          \sum_{e \in \mathcal{E}'} \int_e    \avg{\Phi}_{e}(s)     \dd s  = \int_{{\mathcal E}'} \avg{\Phi}_{e}(s) \dd s,
    \]
    analogously for  $ \avg{\vec{u}}_{e},\,     \jump{\Phi}_{e},\,     \jump{\vec{u}}_{e}, \, \jump{\vec{u}}_{\otimes,e}$.
%\end{definition}
%
%Equipped with all the definitions above, we introduce an  abstract spatially discrete dG formulation of the problem \eqref{eq:p2:mixed}. 
% For the spatial discretization we assume generic discrete fluxes
% %
% \begin{align}
%     F_1,F_3,F_4,F_5 &\colon \mathcal{V}_h \times V_h \to L^2(\mathcal{E}), \\
%     %
%     F_2 &\colon \mathcal{V}_h \times (\oset{\bigcirc}{V}_h)^d \to L^2(\mathcal{E}), \\
%     %
%     F_6 &\colon \mathcal{V}_h \times \oset{\small{\vec{n}}}{V}_h \to L^2(\mathcal{E}).
% \end{align}
% They will be specified later, in order to fit our requirements.
\newcommand{\Solsemi}{\vec{U}_h}  
\newcommand{\Dummysemi}{\vec{W}_h}   
For the discretization of the  viscous stress tensor $\vec{S} $  in \eqref{eq:p2:mixed}, we use  for some 
$\varphi_h \in V_h$ with $0\le \varphi \le 1 $ a.e. 
the  discrete bilinear form $B_h[\varphi_h;\cdot,\cdot] \colon   {(\Vzero)}^d  \times  {(\Vzero)}^d\to \bR$ given by 
  \begin{equation}\label{eq:bilinearform}
        \begin{array}{rcl}
            \lefteqn{B_h[\varphi_h;\vec{v}_h,{\vec{X}_h}] :=  B[\phi_h;\vec{v}_h,{\vec{X}_h}]} \\[1.5ex]
            &&\ds {}-  %\sum_{e \in \mathcal{E}\cup \partial \Omega} 
            \int_{\mathcal{E}\cup \partial \Omega} \avg{\vec{S}(\nabla\vec{v}_h,\varphi_h)} : \jump{\vec{\vec{X}_h}}_\otimes
            + \avg{\vec{S}(\nabla \vec{X}_h,\varphi_h)} : \jump{\vec{v}_h}_\otimes \dd s \\[2.5ex]
           & &\ds {}+%\sum_{e \in \mathcal{E}\cup \partial \Omega} 
           \int_{\mathcal{E}\cup \partial \Omega} \frac{\alpha_B}{|e|}\jump{\vec{v}_h}_\otimes : \jump{\vec{\vec{X}_h}}_\otimes \dd s   \qquad \qquad  \qquad \big(\vec{v}_h, \vec{X}_h \in {(\Vzero)}^d\big).
        \end{array}
    \end{equation}
Here  $\alpha_B>0$  is a parameter which is assumed  to be big enough to ensure the coercivity  of $B_h$, see Remark \ref{rem:coercive} 
below.\\ 
Based on these notations we  introduce an abstract version  of a spatially semi-discrete dG method  which leaves the choice of discrete fluxes still open. 
\begin{Definition}[General spatially semi-discrete dG method]\label{alg:p2:spacesemi} \ \\ 
Let some mappings
\begin{equation}\label{def:F}
\begin{array}{rclcc}
     F_1,F_3,F_4,F_5 &\colon& \mathcal{V}_h \times V_h &\to& L^2(\mathcal{E}), \\
%     %
     F_2 &\colon& \mathcal{V}_h \times (\Vzero)^d &\to&L^2(\mathcal{E}), \\
%     %
     F_6 &\colon& \mathcal{V}_h \times    \Vnormal &\to&L^2(\mathcal{E})
 \end{array}
 \end{equation}
be given that satisfy the consistency condition 
\begin{equation} \label{flus}
F_i[\vec{W}, \cdot ] =0 \quad \big(i\in  \{1,\ldots,6\}\big)
\end{equation}
for all  continuous functions $\vec{W} \in  {\mathcal V} \subset {\mathcal V}_h$. \\  
The  function  $ \Solsemi:= (\rho_h,\vec{v}_h,\varphi_h,\mu_h,\tau_h,\vec{\sigma}_h) \in C^0( [0,T];{\mathcal V}_h) $  with
$ \Solsemi(\cdot,0) =   \vec{U}_{0,h} $ and 
  \begin{equation}\label{eq:reguh}
  \begin{array}{c}\rho_h(t) > 0,\,   \varphi_h(t) \in [0,1]\text{ a.e.},\\[1.1ex]
  \rho^{-1}_h(t), \,  (\rho_h \tilde f(\varphi_h,\rho_h))(t), \,   \rho_h(t) {|\vec{v}_h(t)|}^2  \in L^2(\Omega),  \\[1.1ex]
  \del_t \rho_h(t),    \del_t \varphi_h(t)   \in L^2(\Omega),         \del_t \vec{v}_h(t),{ \del_t\vec{\sigma_h}}(t) \in  (L^2(\Omega))^d
  \end{array}
  \end{equation}
for all $t \in [0,T]$  is called a \textit{general semi-discrete dG approximation}
if it satisfies 
    \begin{equation}
    \begin{array}{rcl}
        0 &=& \ds \int_\Omega (\del_t \rho_h + \div(\rho_h\vec{v}_h)) \psi_h \ddd \vec{x} + \int_{\mathcal{E}} F_1[\Solsemi(t),\psi_h] \dd s,
       \\[1.9ex]
        0 &=& \ds \int_\Omega \Big(\vphantom{\frac{1}{2}} \rho_h \del_t \vec{v}_h + \div(\rho_h\vec{v}_h\otimes \vec{v}_h) - \div(\rho_h\vec{v}_h)\vec{v}_h   \\[1.7ex]
        &&\ds \phantom{\int_\Omega} + \rho_h \nabla \tau_h - \mu_h\nabla \varphi_h -\frac{1}{2}\rho_h \nabla|\vec{v}_h|^2\Big) \cdot \vec{X}_h \dd \vec{x} \\[1.7ex]
        &&\ds {}+ \int_{\mathcal{E}} F_2[\Solsemi(t),\vec{X}_h] \ddd s + B_h[\varphi;\vec{v}_h, \vec{X}_h], \\[1.9ex]
        0 &=& \ds \int_\Omega \Big( \del_t \varphi_h + \vec{v}_h\cdot \nabla \varphi_h + \eta \frac{\mu_h}{\rho_h} \Big) \Theta_h \dd \vec{x} \\[1.7ex]
        &&\ds {} + \int_{\mathcal{E}} F_3[\Solsemi(t),\Theta_h] \dd s, \\[1.7ex]
        0 &=& \ds \int_\Omega \Big( \mu_h-\frac{\del \rho \tilde{f}}{\del \varphi}(\rho_h,\varphi_h)+\gamma \div(\vec{\sigma}_h) \Big) \chi_h \dd \vec{x} \\[1.7ex]
        &&\ds {}+ \int_{\mathcal{E}} F_4[\Solsemi(t),\chi_h] \dd s, \\[1.7ex]
        0 &=& \ds \int_\Omega \Big( \tau_h - \frac{\del \rho \tilde{f}}{\del \rho}(\rho_h,\varphi_h)-\frac{1}{2}|\vec{v}_h|^2 \Big) \zeta_h \dd \vec{x} \\[1.7ex]
        &&\ds  {}+ \int_{\mathcal{E}} F_5[\Solsemi(t),\zeta_h] \dd s, \\[1.7ex]
        0 &=& \ds \int_\Omega (\vec{\sigma}_h-\nabla \varphi_h) \cdot \vec{Z}_h \dd \vec{x}
        + \int_{\mathcal{E}} F_6[\Solsemi(t),\vec{Z}_h] \dd s
    \end{array}   \label{eq:p2:spatialsemi}
    \end{equation}
    for all  $t\in (0,t)$ and all  test functions  $(\psi_h,\vec{X}_h,\Theta_h,\chi_h,\zeta_h,\vec{Z}_h) \in \mathcal{V}_h$.
\end{Definition}

\begin{rem}\label{rem:coercive}
    \begin{enumerate}
        \item The spaces for the  semi-discrete  formulation are chosen in a way that all occurring integrals in \eqref{eq:p2:spatialsemi} are well-defined. In the sequel, we assume the existence of a semi-discrete dG approximation $\vec{U}_h$. As for  \eqref{eq:p2:weak} there is no well-posedness analysis for this system. 
        \item 
        For the sake of maintaining the symmetry of the exact bilinear form
         for the viscous stress tensore $\vec{S}$,
        and to obtain a discrete coercivity property, we rely for the  discrete bilinear  form $B_h$ on the  symmetric interior penalty discretization \cite{Arnold2002}. Coercivity is acchieved for choosing $\alpha >0$ big enough in \eqref{eq:bilinearform}.
     %   \item We have used a non-conservative formulation for the momentum balance equation in \eqref{eq:p2:mixed2}. However, it is too restrictive to have mass conservation, energy consistency \emph{and} conservation of momentum. We drop the momentum conservation property in favor of the others.
    \end{enumerate}
\end{rem}

To prove the mass conservation and energy consistency of the semi-discrete dG approximation satisfying \eqref{eq:p2:spatialsemi}, we recall the following result   on discrete partial integration using the space 
 \begin{align*}
        H^{\div}(\mathcal{T}) \coloneqq \Big\{ \vec{u}_h \in (L^2(\mathcal{T}))^d \colon \div(\vec{u}_h) \in L^2(\mathcal{T}) \Big\}.
    \end{align*}
It follows directly from elementwise partial integration and the definition of the jump and average operators in \eqref{def:p2:jump_avg}.

\begin{Proposition}[Elementwise integration]\label{prop:p2:elem_int}
    If $\vec{u}_h\in H^{\div}(\mathcal{T})$ and $\varphi_h \in H^1(\mathcal{T})$, then
    \begin{align*}
        \sum_{T \in \mathcal{T}} \int_T \div(\vec{u}_h)\varphi_h \dd \vec{x} &= \sum_{T \in \mathcal{T}} \left( - \int_T \vec{u_h}\cdot \nabla \varphi_h \dd \vec{x} + \int_{\del T} \varphi_h \vec{u}_h\cdot \vec{n}_T \dd s \right) \\
        &= \sum_{T \in \mathcal{T}} - \int_T \vec{u}_h\cdot \nabla \varphi_h \dd \vec{x}  + \int_{\mathcal{E}\cup \del \Omega} \jump{\vec{u}_h}\avg{\varphi_h} \dd s \\
        &\phantom{=\,}        + \int_{\mathcal{E}\cup \del \Omega} \jump{\varphi_h}\avg{\vec{u}_h} \dd s \\
        &= - \int_{\Omega} \vec{u}_h\cdot \nabla \varphi_h \ddd \vec{x}+ \int_{\mathcal{E}\cup \del \Omega} \jump{\varphi_h\vec{u}_h} \dd s.
    \end{align*}
\end{Proposition}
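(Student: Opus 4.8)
The plan is to derive everything from the classical Gauss--Green formula on a single simplex and then to re-assemble the resulting boundary terms over the mesh skeleton $\mathcal{E}\cup\partial\Omega$ using the definitions in \eqref{def:p2:jump_avg}.

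First, I would fix $T\in\mathcal{T}$ and apply elementwise integration by parts: since $\vec{u}_h|_T\in H^{\div}(T)$ and $\varphi_h|_T\in H^1(T)$, the product rule $\div(\varphi_h\vec{u}_h)=\vec{u}_h\cdot\nabla\varphi_h+\varphi_h\,\div(\vec{u}_h)$ holds in $L^1(T)$, and the divergence theorem for the $H^{\div}$--$H^1$ pairing gives
\[
  \int_T \div(\vec{u}_h)\varphi_h\,\dd\vec{x}
  = -\int_T \vec{u}_h\cdot\nabla\varphi_h\,\dd\vec{x}
    + \int_{\del T}\varphi_h\,\vec{u}_h\cdot\vec{n}_T\,\dd s.
\]
Summing over all $T\in\mathcal{T}$, the volume integrals combine into integrals over $\Omega$, which yields the first displayed identity.

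Next, I would reorganize the skeleton term $\sum_{T\in\mathcal{T}}\int_{\del T}\varphi_h\,\vec{u}_h\cdot\vec{n}_T\,\dd s$. Every interface $e=\bar T_1\cap\bar T_2\in\mathcal{E}$ is visited twice, from $T_1$ with outer normal $\vec{n}_{T_1}$ and from $T_2$ with $\vec{n}_{T_2}=-\vec{n}_{T_1}$, while each boundary face is visited once. On a fixed interface, writing $a_i:=\vec{u}_h|_{T_i}\cdot\vec{n}_{T_1}$ and $\varphi_i:=\varphi_h|_{T_i}$, a one-line computation using $\vec{n}_{T_2}=-\vec{n}_{T_1}$ gives
\[
  \varphi_1 a_1-\varphi_2 a_2
  =\tfrac12(a_1-a_2)(\varphi_1+\varphi_2)+\tfrac12(\varphi_1-\varphi_2)(a_1+a_2)
  =\jump{\vec{u}_h}\avg{\varphi_h}+\jump{\varphi_h}\avg{\vec{u}_h},
\]
where the last equality is exactly the definition of the jump and average operators in \eqref{def:p2:jump_avg}. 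The same expression also equals $\jump{\varphi_h\vec{u}_h}$, since $\jump{\varphi_h\vec{u}_h}=\varphi_h|_{T_1}\vec{u}_h|_{T_1}\cdot\vec{n}_{T_1}+\varphi_h|_{T_2}\vec{u}_h|_{T_2}\cdot\vec{n}_{T_2}=\varphi_1 a_1-\varphi_2 a_2$. On a boundary face the conventions in \eqref{def:p2:jump_avg} reduce all three quantities to the single trace $\varphi_h\,\vec{u}_h\cdot\vec{n}_T$, so the chain of identities holds face by face; integrating over $\mathcal{E}\cup\partial\Omega$ then produces the second and third displayed identities.

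I do not expect a genuine obstacle here. The only points requiring care are purely bookkeeping: keeping track of the opposite orientation of the two outer normals at each interior face, handling the boundary-face conventions consistently, and the mild observation that $\varphi_h\vec{u}_h\in H^{\div}(T)$, so that the elementwise Gauss--Green formula is legitimate (for the polynomial ansatz functions used later this is immediate, but the proposition is stated at the level of broken Sobolev regularity).
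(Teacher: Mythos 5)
Your proof is correct and takes essentially the same route as the paper, which only remarks that the proposition ``follows directly from elementwise partial integration and the definition of the jump and average operators'': your elementwise Gauss--Green formula plus the per-face identity $\varphi_1 a_1-\varphi_2 a_2=\jump{\vec{u}_h}\avg{\varphi_h}+\jump{\varphi_h}\cdot\avg{\vec{u}_h}=\jump{\varphi_h\vec{u}_h}$ is exactly that argument written out. The only point to watch is the boundary-face convention (the paper's definition of the scalar jump on $\partial\Omega$ is garbled): there the single trace $\varphi_h\,\vec{u}_h\cdot\vec{n}_T$ must not be counted once by each of the two products in the middle line, but this ambiguity sits in the paper's statement itself rather than in your argument, whose interior-face bookkeeping is the actual content.
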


%\begin{proof}
%    The identities follow by elementwise partial integration and the definition of the jump and average operators, see \eqref{def:p2:jump_avg}. \qed
    %Note that $\jump{\vec{u}}$ vanishes on $\del \Omega$ since $\vec{u} \in H^{\div}(\mathcal{T})$.
%\end{proof}

 Next, we present necessary and sufficient conditions on the elementwise fluxes from 
\eqref{def:F} to ensure mass conservation and energy stability
for the general spatially  semi-discrete dG approximation.

\begin{thm}[Semi-discrete energy]\label{thm:p2:spatialdisc_conservation}
 For a  general spatially semi-discrete dG approximation  $\Solsemi:= (\rho_h,\vec{v}_h,\varphi_h,\mu_h,\tau_h,\vec{\sigma}_h) \in C^0( [0,T];{\mathcal V}_h)$ 
according to Definition \ref{alg:p2:spacesemi}, the following statements hold. 
    \begin{enumerate}
        \item  For $\vec{U}_h$ holds mass conservation in (0,T), i.e.\ 
    \begin{equation}\label{semidGmassconserve}
      \frac{\dd}{\dd t}  \int_\Omega \rho_h\dd \vec{x} =0
    \end{equation}
     if and only if we have 
        \begin{align*}
             \int_{\mathcal{E}} F_1[\vec{U}_h,1] \dd s &= - \int_{\mathcal{E}} \jump{\rho_h \vec{v}_h} \dd s.
        \end{align*}

        %
       % Here, $1$ is the constant function with value $1$ everywhere on $\Omega$,
        %
        \item  For $\vec{U}_h$ holds the energy dissipation equation
        \begin{align}
        \begin{split}
         \frac{\dd}{\dd t}   \hat E_h(t) &:=\frac{\dd}{\dd t} \left( \int_\Omega \rho_h \tilde{f}(\rho_h,\varphi_h) +\frac{\gamma}{2}|\vec{\sigma}_h|^2 + \frac{\rho_h}{2} |\vec{v}_h|^2 \dd \vec{x} \right) \\[1.7ex]
            &= -\int_{\Omega} \frac{\eta}{\rho_h}\mu_h^2 \dd \vec{x} -  B_h[\varphi_h;\vec{v}_h,\vec{v}_h]  \dd \vec{x} 
            \leq 0 
            \label{eq:p2:energy_dissipation_discrete}
        \end{split}    
        \end{align}
        if and only if the conditions 
        \begin{equation} \label{eq:p2:energy_dissipation_discrete_cond1}
        \begin{aligned}
            0 =& - \int_{\mathcal{E}} \gamma   \frac{\dd}{\dd t} F_6[\vec{U}_h,\vec{\sigma}_h] \dd s+\int_{\mathcal{E}}\gamma \jump{\del_t \varphi_h \vec{\sigma}_h} \dd s \\
           & 
         - \int_{\mathcal{E}} F_4[\vec{U}_h,\del_t \varphi_h] \dd s,
           \\
            0 =&  \int_{\mathcal{E}} F_1[\vec{U}_h,\tau_h]\dd s
            + \int_{\mathcal{E}} F_2[\vec{U}_h,\vec{v}_h] \dd s  \\
            &+ \int_{\mathcal{E}} F_3[\vec{U}_h,\mu_h]\dd s
            +  \int_{\mathcal{E}} \jump{\rho_h\vec{v}_h\tau_h} \dd s,
        \end{aligned}
         \end{equation}
         and $F_5 \equiv 0 $ hold in  $(0,T)$. %$(\rho_h,\vec{v}_h,\varphi_h,\mu_h,\tau_h,\vec{\sigma}_h) \colon [0,T] \to \mathcal{V}_h$. 
    \end{enumerate}
\end{thm}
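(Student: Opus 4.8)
The plan is to handle the two statements separately, in each case testing the semi-discrete relations \eqref{eq:p2:spatialsemi} with the same functions used in the proof of Theorem~\ref{thm:p2:energy_ineq} — constants, the components of $\vec U_h$, and the variational quantities $\mu_h,\tau_h$ — while book-keeping the interface integrals $\int_{\mathcal E}F_i[\vec U_h,\cdot]\,ds$ that now survive. For part~(1) I would test $\eqref{eq:p2:spatialsemi}_1$ with the constant $\psi_h\equiv1\in V_h$: by Proposition~\ref{prop:p2:elem_int} (with $\vec u_h=\rho_h\vec v_h$, $\varphi_h\equiv1$) the term $\int_\Omega\div(\rho_h\vec v_h)\,d\vec x$ becomes $\int_{\mathcal E\cup\partial\Omega}\jump{\rho_h\vec v_h}\,ds$, whose boundary part vanishes because $\vec v_h\in(\Vzero)^d$ is zero on $\partial\Omega$; this gives $\tfrac{d}{dt}\int_\Omega\rho_h\,d\vec x+\int_{\mathcal E}\jump{\rho_h\vec v_h}\,ds+\int_{\mathcal E}F_1[\vec U_h,1]\,ds=0$, and the claimed equivalence follows at once. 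This step is routine.

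For part~(2) I would differentiate $\hat E_h$ and apply the chain rule (justified by \eqref{eq:reguh} and the boundedness of the derivatives of $\tilde f$) to write $\tfrac{d}{dt}\hat E_h=\int_\Omega\partial_t\rho_h\,\partial_\rho(\rho\tilde f)+\partial_t\varphi_h\,\partial_\varphi(\rho\tilde f)+\gamma\,\partial_t\vec\sigma_h\cdot\vec\sigma_h+\tfrac12\partial_t\rho_h|\vec v_h|^2+\rho_h\vec v_h\cdot\partial_t\vec v_h\,d\vec x$, and then reproduce the reduction from the continuous proof step by step. First, $\eqref{eq:p2:spatialsemi}_5$ tested with $\zeta_h=\partial_t\rho_h$ collapses $\partial_t\rho_h\,\partial_\rho(\rho\tilde f)+\tfrac12\partial_t\rho_h|\vec v_h|^2$ into $\int_\Omega\partial_t\rho_h\tau_h\,d\vec x$ plus the isolated term $\int_{\mathcal E}F_5[\vec U_h,\partial_t\rho_h]\,ds$ — note that $|\vec v_h|^2$ enters here only as a coefficient, so no test function of degree $2k$ is ever required. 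Next, the time-derivative of $\eqref{eq:p2:spatialsemi}_6$ with $\vec Z_h=\vec\sigma_h$ (used once more with $\vec Z_h=\partial_t\vec\sigma_h$), Proposition~\ref{prop:p2:elem_int} to move $\nabla\partial_t\varphi_h$ onto $\vec\sigma_h$, and $\eqref{eq:p2:spatialsemi}_4$ tested with $\chi_h=\partial_t\varphi_h$, turn $\partial_t\varphi_h\,\partial_\varphi(\rho\tilde f)+\gamma\,\partial_t\vec\sigma_h\cdot\vec\sigma_h$ into $\int_\Omega\mu_h\partial_t\varphi_h\,d\vec x$ plus exactly the interface remainder displayed in the first line of \eqref{eq:p2:energy_dissipation_discrete_cond1}. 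Then $\eqref{eq:p2:spatialsemi}_{1}$ and $\eqref{eq:p2:spatialsemi}_{3}$ tested with $\tau_h$ and $\mu_h$ (applying Proposition~\ref{prop:p2:elem_int} to $\int_\Omega\div(\rho_h\vec v_h)\tau_h\,d\vec x$, whose boundary contribution again vanishes by $\vec v_h|_{\partial\Omega}=0$), together with $\eqref{eq:p2:spatialsemi}_2$ tested with $\vec v_h$, eliminate $\int_\Omega\partial_t\rho_h\tau_h+\mu_h\partial_t\varphi_h+\rho_h\vec v_h\cdot\partial_t\vec v_h\,d\vec x$; the remaining volume integrals cancel exactly as in the continuous case — in particular through the pointwise identity $\div(\rho\vec v\otimes\vec v)\cdot\vec v=\div(\rho\vec v)|\vec v|^2+\tfrac12\rho\,\vec v\cdot\nabla|\vec v|^2$, which is precisely why the non-conservative momentum form of \eqref{eq:p2:mixed} was adopted — leaving $\tfrac{d}{dt}\hat E_h=-\int_\Omega\tfrac{\eta}{\rho_h}\mu_h^2\,d\vec x-B_h[\varphi_h;\vec v_h,\vec v_h]+\mathcal R$, where $\mathcal R$ is the sum of all surviving interface integrals.

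From here, $\tfrac{d}{dt}\hat E_h$ equals the right-hand side of \eqref{eq:p2:energy_dissipation_discrete} iff $\mathcal R=0$; grouping $\mathcal R$ by the fluxes and test functions that occur yields the $F_4,F_6$ and $\jump{\partial_t\varphi_h\vec\sigma_h}$ contributions (from the $\varphi_h/\vec\sigma_h$ step), the $F_1,F_2,F_3$ and $\jump{\rho_h\vec v_h\tau_h}$ contributions (from the $\rho_h/\vec v_h/\mu_h$ steps), and the isolated $\int_{\mathcal E}F_5[\vec U_h,\partial_t\rho_h]\,ds$; the first two groups vanishing are exactly the two equations in \eqref{eq:p2:energy_dissipation_discrete_cond1}, and since $\partial_t\rho_h$ is otherwise unconstrained in $V_h$ the third forces $F_5\equiv0$. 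The inequality $\le0$ is then immediate from $\eta,\rho_h>0$ and the coercivity of $B_h$ (Remark~\ref{rem:coercive}). I expect the main obstacle to be the interface bookkeeping in the $\vec\sigma_h$ step: because $\vec\sigma_h$ itself depends on $t$, differentiating $\eqref{eq:p2:spatialsemi}_6$ in time with a frozen test function produces both a $\tfrac{d}{dt}F_6[\vec U_h,\vec\sigma_h]$ term and an $F_6[\vec U_h,\partial_t\vec\sigma_h]$ correction, and one must verify — using $\eqref{eq:p2:spatialsemi}_6$ once more with $\vec Z_h=\partial_t\vec\sigma_h$ and the boundary condition $\vec\sigma_h\cdot\vec n=0$ — that these reassemble, together with the Proposition~\ref{prop:p2:elem_int} jump term, into precisely the expression in \eqref{eq:p2:energy_dissipation_discrete_cond1}; checking that every application of Proposition~\ref{prop:p2:elem_int} indeed annihilates its boundary term (each relies on $\vec v_h|_{\partial\Omega}=0$ or $\vec\sigma_h\cdot\vec n|_{\partial\Omega}=0$) is the other delicate point.
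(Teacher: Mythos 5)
Your proposal is correct and follows essentially the same route as the paper's proof: testing with $\psi_h\equiv 1$ for mass conservation, then the chain rule for $\hat E_h$, equations $\eqref{eq:p2:spatialsemi}_{4,5}$ tested with $\del_t\varphi_h,\del_t\rho_h$, the time derivative of $\eqref{eq:p2:spatialsemi}_6$ with $\vec{Z}_h=\vec{\sigma}_h$, the remaining equations tested with $\tau_h,\vec{v}_h,\mu_h$, elementwise integration by parts, and the trace-independence argument forcing $F_5\equiv 0$ and \eqref{eq:p2:energy_dissipation_discrete_cond1}. The only wrinkle you flag --- the $F_6$ bookkeeping --- is handled in the paper exactly by freezing the test function $\vec{Z}_h=\vec{\sigma}_h(t)$ at the fixed time before differentiating, so the term $\frac{\dd}{\dd t}F_6[\vec{U}_h,\vec{\sigma}_h]$ means the derivative with the second argument held fixed and no $F_6[\vec{U}_h,\del_t\vec{\sigma}_h]$ correction actually arises.
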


\begin{proof}
 \phantom{ddd}\\
    1. Using  $\psi_h \equiv 1 \in V_h $ in the first relation in \eqref{eq:p2:spatialsemi} we  obtain  $  \frac{\dd}{\dd t} \int_\Omega \rho_h \dd \vec{x} =0$ if and only if  
        \begin{align*}
            0 =& - \int_\Omega \div(\rho_h \vec{v}_h) \ddd \vec{x} 
            - \int_{\mathcal{E}} F_1[\vec{U}_h,1] \dd s. \\
            \intertext{We infer with Proposition \ref{prop:p2:elem_int} the relations}
            0=& -\int_{\mathcal{E}\cup \del \Omega} \jump{\rho_h\vec{v}_h} \avg{1} \ddd s - \int_{\mathcal{E}} F_1[\vec{U}_h,1] \dd s.
        \end{align*}
        With $\vec{v}_h \equiv\vec{0}$ on $\del \Omega$ this is the first assertion of the theorem.\\
        2. To show \eqref{eq:p2:energy_dissipation_discrete} for the discrete energy $  \hat E_h(t)$
      %  \[
       %       \hat E_h(t) := \int_\Omega \rho \tilde{f}(\rho_h,\varphi_h) +\frac{\gamma}{2}|\vec{\sigma}_h|^2 + \frac{\rho_h}{2} |\vec{v}_h|^2 \dd \vec{x}
       % \]
          we explicitly compute the time derivative
        \begin{align*}
            \timed    \hat E_h(t) =& \int_\Omega  \Big(\del_t \rho_h\frac{\del (\rho\tilde{f})}{\del \rho}(\rho_h,\varphi_h) 
            + \del_t \varphi_h \frac{\del (\rho\tilde{f})}{\del \varphi}(\rho_h,\varphi_h) + \gamma \del_t \vec{\sigma}_h \cdot \vec{\sigma}_h \\
            &\phantom{\int_\Omega}  \hspace*{9.5em} +\frac{1}{2}\del_t \rho_h |\vec{v}_h|^2+ \rho_h\vec{v}_h \cdot \del_t \vec{v}_h  \Big)\dd \vec{x}.\\
        \end{align*}
            Taking the time derivative of the last relation in \eqref{eq:p2:spatialsemi}  and then choosing  $\vec{Z}_h= \vec{\sigma}_h(t)$ for arbitrary but fixed $t$ we obtain
        \begin{align*}
            \lefteqn{\hspace*{-2em}\timed   \hat E_h (t) }\\
              =&\int_\Omega \Big(\del_t \rho_h\frac{\del (\rho\tilde{f})}{\del \rho}(\rho_h,\varphi_h) 
            + \del_t \varphi_h\frac{\del (\rho\tilde{f})}{\del \varphi}(\rho_h,\varphi_h)  + \gamma \vec{\sigma}_h \cdot \nabla (\del_t \varphi_h)\\
            &\phantom{\int_\Omega} \hspace*{14em}+\frac{1}{2}\del_t \rho_h |\vec{v}_h|^2 + \rho_h\vec{v}_h \cdot \del_t \vec{v}_h   \Big)\dd \vec{x} \\
            & - \int_{\mathcal{E}}\gamma   \frac{\dd}{\dd t} F_6[\vec{U}_h,\vec{\sigma}_h] \dd s.
            \end{align*}
            Applying Proposition \ref{prop:p2:elem_int}  and $\vec{\sigma}_h\in  V_{h,\vec{n}} $ yields
            \begin{align*}
            \lefteqn{\hspace*{-2em}\timed   \hat E_h(t))}\\ 
           =& \int_\Omega \Big(  \del_t \rho_h\frac{\del (\rho\tilde{f})}{\del \rho}(\rho_h,\varphi_h) + \del_t \varphi_h\frac{\del (\rho\tilde{f})}{\del \varphi}(\rho_h,\varphi_h)- \gamma \del_t \varphi_h \div(\vec{\sigma}_h)
           \\
            &\hspace*{16em}  +  \frac{1}{2}\del_t \rho_h |\vec{v}_h|^2  + \rho_h\vec{v}_h \cdot \del_t \vec{v}_h\Big) \dd \vec{x} \\
            &- \int_{\mathcal{E}}  \gamma   \frac{\dd}{\dd t} F_6[\vec{U}_h(t),\vec{\sigma}_h] \dd s
            + \int_{\mathcal{E} }  \gamma \jump{(\del_t \varphi_h)\vec{\sigma}_h} \dd s.
          %  &+ \int_{\del \Omega} \gamma \del_t \varphi_h \vec{\sigma}_h \cdot \vec{n} \dd s.\footnote{Hier ist die Anwendung die bei richtiger Formulierung der Prop Randaverages vermeidet} \\
            %
            \end{align*}
            {Like in the proof of Theorem \ref{thm:p2:discrete_energy_ineq} using the  dG equations for $\mu_h$ and $\tau_h$ in \eqref{eq:p2:spatialsemi} we obtain}
            \begin{align*} \timed   \hat E_h(t)) 
            =& \int_\Omega \Big( \del_t \rho_h\tau_h  + \del_t \varphi_h\mu_h 
            %- \div(\rho_h\vec{v}_h)|\vec{v}_h|^2 
            %\\
            %
           % &\hspace*{1em}
            -\vec{v}_h\cdot\div(\rho_h\vec{v}_h\otimes \vec{v}_h) + \div(\rho_h\vec{v}_h)|\vec{v}_h|^2\\ 
           & \hspace*{1em}- \rho_h  \nabla \tau_h \cdot\vec{v}_h+  \mu_h \nabla \varphi_h\cdot \vec{v}_h+ \frac{1}{2} \rho_h \nabla |\vec{v}_h|^2 \cdot\vec{v}_h  \Big) \dd \vec{x} \\
            &+ \int_{\mathcal{E}} F_4[\vec{U}_h,\del_t \varphi_h] \dd s + \int_{\mathcal{E}} F_5[\vec{U}_h,\del_t \varrho_h] \dd s \\
            &-  \int_{\mathcal{E}} \gamma   \frac{\dd}{\dd t} F_6[\vec{U}_h,\vec{\sigma}_h]
            +\int_{\mathcal{E}} \gamma \jump{(\del_t \varphi_h)\vec{\sigma}_h} \dd s \\
            &%+\int_{\del \Omega} \gamma \del_t \varphi_h \vec{\sigma}_h \vec{n} \dd s
            - \int_{\mathcal{E}} F_2[\vec{U}_h,\vec{v}_h] \dd s-  B_h[\varphi_h;\vec{v}_h,\vec{v}_h].  \\
            \intertext{We eliminate the time derivatives of $\rho_h$ and $\varphi_h$ in the volume integral
            with the respective evolutions equations in  \eqref{eq:p2:spatialsemi}. This results in }
            \timed   \hat E_h(t))=& \int_\Omega \Big(-\tau_h \div(\rho_h\vec{v}_h) - \mu_h \nabla \varphi_h \cdot \vec{v}_h - \eta\frac{|\mu_h|^2}{\rho_h} \\
            &\phantom{\int_\Omega \Big( \, }- \div(\rho_h \vec{v}_h \otimes \vec{v}_h) \vec{v}_h + \div(\rho_h\vec{v}_h) |\vec{v}_h|^2 - \rho_h \vec{v}_h \cdot \nabla \tau_h \\
            &\hspace*{9.5em}+ \mu_h \vec{v}_h \cdot \nabla \varphi_h + \frac{1}{2} \rho_h \vec{v}_h \cdot \nabla |\vec{v}_h|^2\Big) \dd \vec{x} \\
            &+ \int_{\mathcal{E}} F_4[\vec{U}_h,\del_t \varphi_h] \dd s  + \int_{\mathcal{E}} F_5[\vec{U}_h,\del_t \rho_h] \dd s\\
            &- \int_{\mathcal{E}} \gamma   \frac{\dd}{\dd t} F_6[\vec{U}_h,\vec{\sigma}_h]
            + \int_{\mathcal{E}} \gamma\jump{(\del_t \varphi_h)\vec{\sigma}_h} \dd s \\
            %
           % &+\int_{\del \Omega} \gamma \del_t \varphi_h \vec{\sigma}_h \vec{n} \dd s \\
            %
            &- \int_{\mathcal{E}} F_2[\vec{U}_h,\vec{v}_h] \dd s
            - B_h[\varphi_h;\vec{v}_h,\vec{v}_h) \\
            &- \int_{\mathcal{E}} F_3[\vec{U}_h,\mu_h] \dd s
            - \int_{\mathcal{E}} F_1[\vec{U}_h,\tau_h] \dd s. \\
            \end{align*}
            {Finally, using integration by parts via Proposition \ref{prop:p2:elem_int} and $\vec{v}_h\in  (V_{h,0})^d$,  we obtain}
            \begin{equation}\label{eq:in:proof}
            \begin{array}{rcl}\ds
            \timed   \hat E_h((t))&=& \ds \int_{\mathcal{E}} F_4[\vec{U}_h,\del_t \varphi_h] \dd s
                                             +\int_{\mathcal{E}} F_5[\vec{U}_h,\del_t \rho_h] \dd s \\[2.0ex]
            &&\ds {}- \int_{\mathcal{E}}\gamma    \frac{\dd}{\dd t} F_6[\vec{U}_h,\vec{\sigma}_h] \dd s
            +  \int_{\mathcal{E}}\gamma  \jump{\del_t \varphi_h \vec{\sigma}_h} \dd s  \\[2.0ex]
            %
         %   &+ \int_{\del \Omega} \gamma \del_t \varphi_h \vec{\sigma}_h \cdot \vec{n} \dd  s \\
            %
            &&\ds-\int_{\mathcal{E}\cup } \jump{\rho_h\vec{v}_h\tau_h} \dd s - \int_{\mathcal{E}} F_1[\vec{U}_h,\tau_h] \dd s \\[2.0ex]
            &&\ds- \int_{\mathcal{E}} F_2[\vec{U}_h,\vec{v}_h] \ddd s
            - \int_{\mathcal{E}} F_3[\vec{U}_h,\mu_h] \dd s  \\[2.0ex]
            &&\ds- B_h[\varphi_h;\vec{v}_h,\vec{v}_h]- \int_{\Omega} \eta \frac{\mu_h^2}{\rho_h}  \dd \vec{x}. \\[2.0ex]
            \end{array}
            \end{equation}
            {To fulfill the discrete energy dissipation equation \eqref{eq:p2:energy_dissipation_discrete}, we need}
            \begin{align*}
            0 =&  \int_{\mathcal{E}} F_4[\vec{U}_h,\del_t \varphi_h] \dd s  +\int_{\mathcal{E}} F_5[\vec{U}_h,\del_t \rho_h] \dd s\\
            &- \int_{\mathcal{E}} \gamma   \frac{\dd}{\dd t} F_6[\vec{U}_h,\vec{\sigma}_h] \dd s
                + \int_{\mathcal{E}}\jump{\del_t \varphi_h \vec{\sigma}_h} \dd s  \\
            %
         %   &+ \int_{\del \Omega} \gamma \del_t \varphi_h \vec{\sigma}_h \cdot \vec{n} \dd  s \\
            %
            &-\int_{\mathcal{E}} \jump{\rho_h\vec{v}_h\tau_h} \dd s - \int_{\mathcal{E}} F_1[\vec{U}_h,\tau_h] \dd s \\
            &- \int_{\mathcal{E}} F_2[\vec{U}_h,\vec{v}_h] \dd s
            - \int_{\mathcal{E}} F_3[\vec{U}_h,\mu_h) \dd s.
        \end{align*}
      %  Note that when this equality holds, $F_6$ cannot depend on $\rho_h,\vec{v}_h,\tau_h,\mu_h,\vec{\sigma}_h$.
      %  Additionally, every summand in $  \frac{\dd}{\dd t} F_6$ and $F_4$ has to depend on $\del_t \varphi_h$. 
         Since the traces of $\del_t \varphi_h$  as well as  $\del_t \rho_h$  are independent of the traces of the other quantities, the terms containing these terms  and the ones without them cannot cancel each other.
        This yields    $F_5\equiv 0$  and the conditions in \eqref{eq:p2:energy_dissipation_discrete_cond1}.\qed
    
\end{proof}

The  generic  (but non-unique) choice of consistent fluxes given by 
\begin{equation}\label{eq:fluxchoice}
\begin{array}{rcl}
    F_1[\vec{U}_h,\psi_h] &=& - \jump{\rho_h\vec{v}_h} \avg{\psi_h}, \\[1.0ex]
    F_2[\vec{U}_h,\vec{X}_h] &=& - \jump{\tau_h} \avg{\rho_h \vec{X}} + \jump{\varphi_h} \avg{\mu_h \vec{X}}, \\[1.0ex]
    F_3[\vec{U}_h,{\Theta}_h] &=& - \jump{\varphi_h}\avg{\Theta \vec{v}_h}, \\[1.0ex]
    F_4 [\vec{U}_h,{\chi}_h]&=& - \gamma \jump{\vec{\sigma}_h} \avg{\chi}, \\[1.0ex]
    F_5 [\vec{U}_h,{\zeta}_h]&=& 0, \\[1.0ex]
    F_6 [\vec{U}_h,{\vec{Z}}_h]&=& \jump{\varphi_h} \avg{\vec{Z}}
\end{array}
\end{equation}
for $\vec{U}_h, \,  (\psi_h,\vec{X}_h,\Theta_h,\chi_h,\zeta_h,\vec{Z}_h) \in \mathcal{V}_h$ 
fulfills the conditions in \eqref{eq:p2:energy_dissipation_discrete_cond1} of  Theorem \ref{thm:p2:discrete_energy_ineq}. The corresponding semi-discrete dG approximation dissipates then
exactly the discrete counterpart of the energy dissipated by a weak solution of the  NSAC system, see \eqref{eq:p2:energy_dissipation_discrete}  and \eqref{eq:p2:energy_ineq}. 
For stabilization reasons we can add appropriate dissipative fluxes to the ones in \eqref{eq:fluxchoice} such that the energy  $  \hat E_h$ is  still dissipated.  
Restricting stabilizing terms to the evolution equations for the density, the velocity, and  the phase-field variable we get the following  final result of the
section. 

\begin{Corollary}[Stabilized  semi-discrete dG approximation]\label{cor:stable}
Let  para\-meters $\alpha_1,\alpha_2,\alpha_3 \ge 0$ be given. We consider the fluxes $F_4,F_5,F_6$ as in \eqref{eq:fluxchoice} but choose $F_1,F_2,F_3$ according to 
\begin{align}
    F_1 &= - \jump{\rho_h\vec{v}_h} \avg{\psi}+ \alpha_1 \jump{\tau_h}\jump{\psi},  \label{eq:p2:flux1}\\
    F_2 &= - \jump{\tau_h} \avg{\rho_h \vec{X}} + \jump{\varphi_h} \avg{\mu_h \vec{X}} + \alpha_2 \jump{\vec{v}_h}\jump{\vec{X}}\label{eq:p2:flux2},\\
    F_3 &= - \jump{\varphi_h}\avg{\Theta \vec{v}_h} + \alpha_3 \jump{\mu_h}\jump{\Theta} \label{eq:p2:flux3}.
\end{align}
Then, a general semi-discrete dG  approximation $\vec{U}_h$ satisfies the energy dissipation inequality 
\begin{align}
        \begin{split}
         \frac{\dd}{\dd t}   \hat E_h(t)
            &= -\int_{\Omega} \frac{\eta}{\rho_h}\mu_h^2 \dd \vec{x} -  B_h[\varphi_h;\vec{v}_h,\vec{v}_h]  \dd \vec{x} \\
           &\phantom{=\,\,} - \int_{\mathcal{E}}\alpha_1 {\jump{\tau_h}}^2+   \alpha_2 {\jump{\vec{v}_h}}^2  +  \alpha_3 {\jump{\mu_h}}^2 \dd s
            \leq 0.            \label{eq:p2:energy_dissipation_discrete_stable}
        \end{split}    
        \end{align}
\end{Corollary}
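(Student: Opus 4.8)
The plan is to build directly on the master identity \eqref{eq:in:proof} derived in the proof of Theorem \ref{thm:p2:spatialdisc_conservation}, which expresses $\tfrac{\dd}{\dd t}\hat E_h(t)$ in terms of the still-unspecified inter-element fluxes $F_1,\dots,F_6$ and is valid for \emph{any} consistent choice. Since every flux functional $F_i[\vec{U}_h,\cdot]$ is linear in its second argument, substituting the stabilized fluxes \eqref{eq:p2:flux1}--\eqref{eq:p2:flux3} together with $F_4,F_5,F_6$ from \eqref{eq:fluxchoice} splits the right-hand side of \eqref{eq:in:proof} into the contribution of the generic fluxes \eqref{eq:fluxchoice} and the contribution of the three added penalty terms $\alpha_1\jump{\tau_h}\jump{\cdot}$, $\alpha_2\jump{\vec{v}_h}\jump{\cdot}$, $\alpha_3\jump{\mu_h}\jump{\cdot}$.

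For the generic part I would simply invoke the observation already recorded after \eqref{eq:fluxchoice}: the fluxes in \eqref{eq:fluxchoice} satisfy $F_5\equiv 0$ and both conditions in \eqref{eq:p2:energy_dissipation_discrete_cond1}, so by Theorem \ref{thm:p2:spatialdisc_conservation} their contribution to \eqref{eq:in:proof} collapses to $-\int_\Omega \tfrac{\eta}{\rho_h}\mu_h^2\,\dd\vec{x} - B_h[\varphi_h;\vec{v}_h,\vec{v}_h]$ (equivalently, one re-runs the cancellations in the proof of Theorem \ref{thm:p2:spatialdisc_conservation} verbatim). The three penalty terms enter \eqref{eq:in:proof} only through $-\int_{\mathcal{E}}F_1[\vec{U}_h,\tau_h]$, $-\int_{\mathcal{E}}F_2[\vec{U}_h,\vec{v}_h]$, and $-\int_{\mathcal{E}}F_3[\vec{U}_h,\mu_h]$; evaluating $\jump{\psi}$ at $\psi=\tau_h$, $\jump{\vec{X}}$ at $\vec{X}=\vec{v}_h$, and $\jump{\Theta}$ at $\Theta=\mu_h$ reproduces the very jumps appearing in the penalty, so the extra contribution is exactly
\[
-\int_{\mathcal{E}} \alpha_1 \jump{\tau_h}\jump{\tau_h} + \alpha_2 \jump{\vec{v}_h}\jump{\vec{v}_h} + \alpha_3 \jump{\mu_h}\jump{\mu_h}\,\dd s
= -\int_{\mathcal{E}} \alpha_1 \jump{\tau_h}^2 + \alpha_2 \jump{\vec{v}_h}^2 + \alpha_3 \jump{\mu_h}^2\,\dd s .
\]
Adding the two pieces yields exactly the claimed equality \eqref{eq:p2:energy_dissipation_discrete_stable}.

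The inequality then follows termwise: $\eta,\rho_h>0$ makes the volume term nonpositive, $\alpha_1,\alpha_2,\alpha_3\ge 0$ makes the boundary penalty term nonpositive, and $B_h[\varphi_h;\vec{v}_h,\vec{v}_h]\ge 0$ by the coercivity of $B_h$ for $\alpha_B$ chosen large enough (Remark \ref{rem:coercive}); hence $\tfrac{\dd}{\dd t}\hat E_h(t)\le 0$. The only step that is not pure bookkeeping — and the place where a careful reader should pause — is the assertion that the generic fluxes \eqref{eq:fluxchoice} actually satisfy the two identities in \eqref{eq:p2:energy_dissipation_discrete_cond1}; this is a short direct check using the definitions \eqref{def:p2:jump_avg}, the product rule for jumps $\jump{ab}=\jump{a}\avg{b}+\avg{a}\jump{b}$ underlying Proposition \ref{prop:p2:elem_int}, and the boundary conditions $\vec{v}_h=\vec{0}$, $\vec{\sigma}_h\cdot\vec{n}=0$ on $\partial\Omega$ together with the time derivative of the $\vec{\sigma}_h$-relation to handle the $F_6$ term.
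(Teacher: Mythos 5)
Your proposal is correct and follows essentially the same route as the paper: it reuses the master identity \eqref{eq:in:proof} from the proof of Theorem \ref{thm:p2:spatialdisc_conservation}, notes that $F_1,F_2,F_3$ enter there with second arguments $\tau_h$, $\vec{v}_h$, $\mu_h$ so the added penalty fluxes produce exactly the negative square terms, and lets the generic part cancel as in \eqref{eq:fluxchoice}. Your extra remarks (linearity of the fluxes in the test argument, the explicit verification of \eqref{eq:p2:energy_dissipation_discrete_cond1}, and coercivity of $B_h$) only make explicit what the paper leaves implicit.
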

%%%%%%%%%%%%%%%%%%%%%%%%%%%%%%%%%%%%%%%%%%%%%%%%%%%%%%%%%%%%%%%%%%%%%%%%%%%%%%%%%%%%%%%%
\begin{proof}
We follow the proof of Theorem \ref{thm:p2:spatialdisc_conservation} and observe 
that  the  second argument in the  fluxes $F_1,F_2,F_3$ 
in \eqref{eq:in:proof}  are taken by the quantities $\tau_h$, $\vec{v_h}$, $\mu_h$. This leads to the negative square terms in \eqref{eq:p2:energy_dissipation_discrete_stable}. Note that a similar  choice is not possible for $F_4, F_5$, $F_6$ since all of them 
comprise time derivatives. 
\qed
\end{proof}
%
%
%
%\begin{align*}
%    0 \geq& - \int_{\mathcal{E}\cup \del \Omega} \gamma   \frac{\dd}{\dd t} F_6[\vec{U}_h,\vec{\sigma}_h] - \gamma \jump{\del_t \varphi_h \vec{\sigma}_h} \ddd s \\
%    %
%    &- \int_{\mathcal{E}\cup \del \Omega} F_1[\vec{U}_h,\del_t \varphi_h] \ddd s, \\
%    %
%    0 \leq&  \int_{\mathcal{E}\cup \del \Omega} F_1[\vec{U}_h,\tau_h] \ddd s
%    + \int_{\mathcal{E}\cup \del \Omega} F_1[\vec{U}_h,\vec{v}_h] \ddd s  \\
%    %
%    &+ \int_{\mathcal{E}\cup \del \Omega} F_3[\vec{U}_h,\mu_h] + %\jump{\rho_h\vec{v}_h\tau_h} \ddd s.
%\end{align*}
%
%%%%%%%%%%%%%%%%%%%%%%%%%%%%%%%%%%%%%%%%%%%%%%%%%%%%%%%%%%%%%%%%%%%%%%%%%%%%%%%%
\subsection{A Time Semi-Discrete Scheme for the NSAC System}\label{sec:p2:temporal}
As the  next step  we devise an energy-stable time discretization for the mixed formulation \eqref{eq:p2:mixed} of the NSAC system.  
Let $0=t_0 < t_1 < \ldots < t_N = T$ be a partition of $[0,T]$ with $N\in \mathbb{N}$. We set $\Delta t_n = t_{n+1}-t_n$. Moreover, we introduce for some function $\Phi:\Omega \to \mathbb{R}$ the notations 
\[
\Phi^n(\vec{x}) := \Phi(\vec{x},t_n)  \text{ and } \Phi^{n+\frac{1}{2}} := \frac{\Phi^{n+1}+\Phi^n}
{2} \quad (\vec{x} \in \Omega).
\]
The temporal discretisation is of Crank\---Nicholson type and chosen in a way that the discrete energy inequality holds. The resulting scheme is of second order accuracy.  

\begin{Definition}[Second-order  time discretization]\ \label{alg:p2:tempsemi}\\
Consider the initial conditions from  \eqref{eq:p2:IC}.
%$\rho^0,\vec{v}^0,\varphi^0,\mu^0,\tau^0,\vec{\sigma}^0$ the temporal semi-discrete scheme reads as:
A function \[
\vec{U}^{\Delta t}:= 
(\rho^{\Delta t},\vec{v}^{\Delta t},\varphi^{\Delta t},\mu^{\Delta t},\tau^{\Delta t},\vec{\sigma}^{\Delta t}) 
: \Omega  \to   C^0([0,T])
\]
with 
\[
\vec{U}^{\Delta t}(t) =   \frac{1}{\Delta t_n}\big({\vec{U}^{n+1}-   \vec{U}^{n}}\big) (t- t_n) +   \vec{U}^{n}\quad (t \in [t_n,t_{n+1}], \, n\in\{0,\ldots,N-1\} )
\]
that satisfies $\vec{U}^{\Delta t}(t) \in {\mathcal V}_h \cap C^1(\bar \Omega)$ for $t\in [0,T]$
is called time-discrete solution of \eqref{eq:p2:mixed} if the time iterates  $\rho^{n+1},\vec{v}^{n+1},\varphi^{n+1},\mu^{n+1},\tau^{n+1},\vec{\sigma}^{n+1}$ satisfy  for $n \in \{0,\ldots, N-1\}$ the equations
    \begin{equation}\label{eq:p2:tempsemi}
    \begin{array}{rcl}
        0 &= & \ds   \frac{\rho^{n+1}-\rho^n}{\Delta t_n} + \div(\rho^{n+\frac{1}{2}}\vec{v}^{n+\frac{1}{2}}),\\[3ex]
        \vec{0}& =&  \ds \rho^{n+\frac{1}{2}}\left( \frac{\vec{v}^{n+1}-\vec{v}^n}{\Delta t_n} \right)
        + \div \big(\rho^{n+\frac{1}{2}}\vec{v}^{n+\frac{1}{2}} \otimes \vec{v}^{n+\frac{1}{2}}\big)\\[2ex]
        &&{} \ds  - \div(\rho^{n+\frac{1}{2}}\vec{v}^{n+\frac{1}{2}})\vec{v}^{n+\frac{1}{2}}  
         - \frac{1}{2} \rho^{n+\frac{1}{2}} \nabla|\vec{v}^{n+\frac{1}{2}}|^2 \\[2ex]  && {} \ds
        + \rho^{n+\frac{1}{2}} \nabla \tau^{n+\frac{1}{2}}
        - \mu^{n+\frac{1}{2}} \nabla \varphi^{n+\frac{1}{2}}
        - \div(\vec{S}\big(\varphi^{n+\frac{1}{2}},\nabla \vec{v}^{n+\frac{1}{2}}))\big),  \\[3ex]
        0 &=&{} \ds \frac{\varphi^{n+1}-\varphi^n}{\Delta t_n} + \nabla \varphi^{n+\frac{1}{2}}\cdot
        \vec{v}^{n+\frac{1}{2}} + \eta \frac{\mu^{n+\frac{1}{2}}}{\rho^{n+\frac{1}{2}}}, \\[4ex]
        0 &=& \mu^{n+\frac{1}{2}}
        - \frac{\rho^{n+1}  \tilde{f}(\rho^{n+1},\varphi^{n+1})- \rho^{n+1} \tilde{f}(\rho^{n+1},\varphi^{n}) +\tilde{f}(\rho^{n},\varphi^{n+1})- \rho^n\tilde{f}(\rho^n,\varphi^n)}{2(\varphi^{n+1}-\varphi^n)} \\[2ex]
        && + \gamma \div(\vec{\sigma}^{n+\frac{1}{2}}), \\[3ex]
        0& =& \tau^{n+\frac{1}{2}} -  \frac{\rho^{n+1} \tilde{f}(\rho^{n+1},\varphi^{n+1})-\rho^n \tilde{f}(\rho^{n},\varphi^{n+1})
        + \rho^{n+1} \tilde{f}(\rho^{n+1},\varphi^{n})-\rho^n \tilde{f}(\rho^n,\varphi^n)}{2(\rho^{n+1}-\rho^n)} \\[2ex]
        & &{}- \frac{1}{4} \left( |\vec{v}^{n+1}|^2 + |\vec{v}^n|^2\right), \\[3ex]
        \vec{0} &=& \vec{\sigma}^{n+1} - \nabla \varphi^{n+1}. 
    \end{array}
    \end{equation}
\end{Definition}
For the sake of simplicity we have chosen here the classical function space ${\mathcal V}_h \cap C^1(\bar \Omega)$  for the unknown  $\vec{U}^{\Delta t}$. We will return to weak formulations in Section \ref{sec:p2:fully_discrete}.
The  well-posedness of \eqref{eq:p2:tempsemi} could be assured for $\Delta t_n$ small enough and is supposed  to hold for the remainder of the section.
Different from the works \cite{Giesselmann2014}, 
a special feature of our time discretization is the staggered implicit-explicit treatment of the discrete  $\varphi$- and $\rho$-derivative of the mixture energy density $\rho \tilde f$ in $\eqref{eq:p2:tempsemi}_{4,5}$.
With this time discretization  we can provide unconditional energy stability for the dG method like it can be achieved by convex-concave splitting methods \cite{Eyre}.
The following theorem provides the exact statement for the  time-discrete solution  $\vec{U}^{\Delta t}$  of \eqref{eq:p2:mixed}.
\begin{thm}[Time-discrete energy stability] \label{thm:p2:energy_ineq_discrete}
    \ \\ 
    The  time-discrete solution  $\vec{U}^{\Delta t}$  of \eqref{eq:p2:mixed} as defined in Definition \ref{alg:p2:tempsemi} satisfies 
  for all $n \in \{0,\ldots,N\}$ the estimate
    \begin{equation}\label{timediscreteienergy}
    \begin{array}{rcl}
       \lefteqn{\int_\Omega \mkern-2mu\rho^n\tilde{f}(\rho^n,\varphi^n) + \frac{1}{2}\rho^n |\vec{v}^n|^2 + \frac{\gamma}{2} |\vec{\sigma}^n|^2 \ddd \vec{x}}\\[3ex]
        &=&{}  \ds \int_\Omega \mkern-2mu\rho^0\tilde{f}(\rho^0,\varphi^0) + \frac{1}{2}\rho^0 |\vec{v}^0|^2 + \frac{\gamma}{2} |\vec{\sigma}^0|^2 \dd \vec{x} \\[3ex]
        &&{}\ds - \sum_{j=0}^{n-1} \Delta t_j \int_\Omega \mkern-2mu \vec{S}(\varphi^{j+\frac{1}{2}},\nabla\vec{v}^{j+\frac{1}{2}}) : \nabla\vec{v}^{j+\frac{1}{2}}
        + \eta \frac{|\mu^{j+\frac{1}{2}}|^2}{\rho^{j+\frac{1}{2}}}  \dd\vec{x}.
    \end{array}
    \end{equation}
\end{thm}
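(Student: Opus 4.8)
The strategy is to prove a one-step identity and then telescope. Write $\hat E^{\,k}:=\int_\Omega \rho^{k}\tilde f(\rho^{k},\varphi^{k})+\tfrac12\rho^{k}|\vec{v}^{k}|^2+\tfrac{\gamma}{2}|\vec{\sigma}^{k}|^2\dd\vec{x}$ for the discrete energy at time $t_{k}$. I would first show that, for each $n\in\{0,\dots,N-1\}$,
\begin{equation*}
\hat E^{\,n+1}-\hat E^{\,n}=-\Delta t_n\int_\Omega \vec{S}(\varphi^{n+\frac12},\nabla\vec{v}^{n+\frac12}):\nabla\vec{v}^{n+\frac12}+\eta\frac{|\mu^{n+\frac12}|^2}{\rho^{n+\frac12}}\dd\vec{x},
\end{equation*}
and then sum over $n$, so that the left side telescopes into \eqref{timediscreteienergy}. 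The one-step computation is the discrete mirror of the proof of Theorem~\ref{thm:p2:energy_ineq}, with $\del_t(\cdot)$ replaced by the increment $(\cdot)^{n+1}-(\cdot)^{n}$ and all nonlinear coefficients frozen at the midpoint $(\cdot)^{n+\frac12}$. Since $\vec{U}^{\Delta t}(t)\in C^1(\bar\Omega)$, all integrations by parts below are classical (no inter-element terms), and one uses $\vec{v}^{k}|_{\del\Omega}=\vec{0}$ and $\vec{\sigma}^{k}\cdot\vec{n}|_{\del\Omega}=0$.

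\textbf{Two algebraic identities.} The key observation is that the four-point difference quotients in $\eqref{eq:p2:tempsemi}_{4}$ and $\eqref{eq:p2:tempsemi}_{5}$ are engineered to satisfy a discrete product rule: writing $D_\varphi:=\mu^{n+\frac12}+\gamma\div(\vec{\sigma}^{n+\frac12})$ and $D_\rho:=\tau^{n+\frac12}-\tfrac14\big(|\vec{v}^{n+1}|^2+|\vec{v}^{n}|^2\big)$ for those two quotients (read as the relevant partial derivatives of $\rho\tilde f$ in the degenerate cases $\varphi^{n+1}=\varphi^{n}$ or $\rho^{n+1}=\rho^{n}$), one has
\begin{equation*}
D_\varphi\,(\varphi^{n+1}-\varphi^{n})+D_\rho\,(\rho^{n+1}-\rho^{n})=\rho^{n+1}\tilde f(\rho^{n+1},\varphi^{n+1})-\rho^{n}\tilde f(\rho^{n},\varphi^{n}).
\end{equation*}
The second identity is elementary: $a_1b_1-a_0b_0=\tfrac12(a_1+a_0)(b_1-b_0)+\tfrac12(b_1+b_0)(a_1-a_0)$ with $a=\rho$, $b=|\vec{v}|^2$, together with $|\vec{v}^{n+1}|^2-|\vec{v}^{n}|^2=2\vec{v}^{n+\frac12}\cdot(\vec{v}^{n+1}-\vec{v}^{n})$, rewrites $\tfrac12\big(\rho^{n+1}|\vec{v}^{n+1}|^2-\rho^{n}|\vec{v}^{n}|^2\big)$ as $\rho^{n+\frac12}\vec{v}^{n+\frac12}\cdot(\vec{v}^{n+1}-\vec{v}^{n})+(\tau^{n+\frac12}-D_\rho)(\rho^{n+1}-\rho^{n})$. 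Using these, together with $\vec{\sigma}^{n+1}-\vec{\sigma}^{n}=\nabla(\varphi^{n+1}-\varphi^{n})$ from $\eqref{eq:p2:tempsemi}_{6}$ and one integration by parts of the ensuing $\vec{\sigma}$-term, the $D_\varphi,D_\rho$ pieces recombine into $\mu^{n+\frac12},\tau^{n+\frac12}$ and $\hat E^{\,n+1}-\hat E^{\,n}$ collapses to
\begin{equation*}
\int_\Omega \mu^{n+\frac12}(\varphi^{n+1}-\varphi^{n})+\rho^{n+\frac12}\vec{v}^{n+\frac12}\cdot(\vec{v}^{n+1}-\vec{v}^{n})+\tau^{n+\frac12}(\rho^{n+1}-\rho^{n})\dd\vec{x}.
\end{equation*}

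\textbf{Inserting the update equations.} Into this expression I would substitute $\eqref{eq:p2:tempsemi}_{1,2,3}$ to replace $\rho^{n+1}-\rho^{n}$, $\rho^{n+\frac12}(\vec{v}^{n+1}-\vec{v}^{n})$ and $\varphi^{n+1}-\varphi^{n}$ by the midpoint spatial operators. Exactly as in the continuous proof, the following then occur: (i) the two copies of $\mu^{n+\frac12}\nabla\varphi^{n+\frac12}\cdot\vec{v}^{n+\frac12}$, arising from the $\varphi$-update and from the momentum equation, cancel; (ii) the convective block $\vec{v}^{n+\frac12}\cdot\div(\rho^{n+\frac12}\vec{v}^{n+\frac12}\otimes\vec{v}^{n+\frac12})-\div(\rho^{n+\frac12}\vec{v}^{n+\frac12})|\vec{v}^{n+\frac12}|^2-\tfrac12\rho^{n+\frac12}\vec{v}^{n+\frac12}\cdot\nabla|\vec{v}^{n+\frac12}|^2$ vanishes pointwise; (iii) $\tau^{n+\frac12}\div(\rho^{n+\frac12}\vec{v}^{n+\frac12})+\rho^{n+\frac12}\vec{v}^{n+\frac12}\cdot\nabla\tau^{n+\frac12}=\div(\tau^{n+\frac12}\rho^{n+\frac12}\vec{v}^{n+\frac12})$ integrates to zero by the no-slip condition; (iv) $-\vec{v}^{n+\frac12}\cdot\div\vec{S}(\varphi^{n+\frac12},\nabla\vec{v}^{n+\frac12})$ integrates by parts to $\vec{S}(\varphi^{n+\frac12},\nabla\vec{v}^{n+\frac12}):\nabla\vec{v}^{n+\frac12}$ with no boundary term. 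What remains is exactly $-\Delta t_n\int_\Omega \eta|\mu^{n+\frac12}|^2/\rho^{n+\frac12}+\vec{S}(\varphi^{n+\frac12},\nabla\vec{v}^{n+\frac12}):\nabla\vec{v}^{n+\frac12}\dd\vec{x}$, \ie the one-step identity; summing over $n\in\{0,\dots,m-1\}$ yields \eqref{timediscreteienergy}.

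\textbf{Main obstacle.} The genuinely delicate point is the discrete product rule: one must check that the particular, slightly asymmetric implicit--explicit split used in $\eqref{eq:p2:tempsemi}_{4,5}$ does reconstruct $\rho^{n+1}\tilde f(\rho^{n+1},\varphi^{n+1})-\rho^{n}\tilde f(\rho^{n},\varphi^{n})$ when paired with the increments of $\varphi$ and $\rho$, and that the degenerate cases $\varphi^{n+1}=\varphi^{n}$, $\rho^{n+1}=\rho^{n}$ cause no trouble (there the corresponding increments vanish on both sides). Once that identity and the kinetic-energy split are in place, the remainder is the same midpoint bookkeeping as in Theorem~\ref{thm:p2:energy_ineq}; note that the conclusion is an exact equality and, unlike the well-posedness of \eqref{eq:p2:tempsemi}, requires no smallness of $\Delta t_n$.
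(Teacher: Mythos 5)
Your proposal is correct and follows essentially the same route as the paper: testing $\eqref{eq:p2:tempsemi}_{1,2,3}$ with $\tau^{n+\frac12}$, $\vec{v}^{n+\frac12}$, $\mu^{n+\frac12}$, using the four-point quotients in $\eqref{eq:p2:tempsemi}_{4,5}$ as a discrete chain rule (your $\tfrac12(T_\mu+T_\tau)$ identity is exactly the paper's computation of the $I_1$-term), the midpoint kinetic-energy split, and $\eqref{eq:p2:tempsemi}_6$ with integration by parts and the boundary conditions, then telescoping. Your explicit treatment of the degenerate cases $\varphi^{n+1}=\varphi^n$, $\rho^{n+1}=\rho^n$ is a small point the paper leaves implicit, but the argument is the same.
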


\begin{proof} We define 
\begin{equation}\label{Tdefine}
\begin{array}{rcl}
T^{n+\frac12}_\mu&:=&\ds \rho^{n+1}\tilde{f}(\rho^{n+1},\varphi^{n+1})
        -\rho^{n+1} \tilde{f}(\rho^{n+1},\varphi^{n})\\[1.5ex]
 &&{}\ds        +\rho^{n}\tilde{f}(\rho^{n},\varphi^{n+1})
        -\rho^{n} \tilde{f}(\rho^{n},\varphi^{n})\\[2ex]
T^{n+\frac12}_\tau&:=&\ds\rho^{n+1} \tilde{f}(\rho^{n+1},\varphi^{n+1})
        -\rho^n \tilde{f}(\rho^n,\varphi^{n+1})\\[1.5ex]
        &&\ds {}
        +\rho^{n+1} \tilde{f}(\rho^{n+1},\varphi^{n})
        -\rho^n \tilde{f}(\rho^n,\varphi^n)
\end{array}
\end{equation}
    By multiplying $\eqref{eq:p2:tempsemi}_1$ with $\tau^{n+\frac{1}{2}}$, $\eqref{eq:p2:tempsemi}_2$ with $\vec{v}^{n+\frac{1}{2}}$ and $\eqref{eq:p2:tempsemi}_3$ with $\mu^{n+\frac{1}{2}}$ \, (, which is analogous to the choice of test functions in the proof of Theorem \ref{thm:p2:spatialdisc_conservation})
    we obtain after integrating over the domain $\Omega$ and using \eqref{Tdefine} the equations 
    \begin{align*}
        0 =& \int_\Omega \frac{1}{2\Delta t_n} T^{n+\frac12}_\mu  + \frac{\rho^{n+1}-\rho^n}{4\Delta t_n}\left( |\vec{v}^{n+1}|^2 + |\vec{v}^n|^2\right) \\
        &\hspace*{10em}+ \div(\rho^{n+\frac{1}{2}}\vec{v}^{n+\frac{1}{2}})\tau^{n+\frac{1}{2}} \ddd\vec{x}, \\
        0 =& \int_\Omega \frac{\rho^{n+\frac{1}{2}}}{2\Delta t_n}\left( |\vec{v}^{n+1}|^2- |\vec{v}^n|^2 \right)
        + \vec{v}^{n+\frac{1}{2}} \div(\rho^{n+\frac{1}{2}}\vec{v}^{n+\frac{1}{2}}\otimes \vec{v}^{n+\frac{1}{2}})
        \\
        &\ds -\div(\rho^{n+\frac{1}{2}}\vec{v}^{n+\frac{1}{2}})|\vec{v}^{n+\frac{1}{2}}|^2
        -\frac{1}{2}\rho^{n+\frac{1}{2}}\nabla|\vec{v}^{n+\frac{1}{2}}|^2 \vec{v}^{n+\frac{1}{2}}
        + \rho^{n+\frac{1}{2}}\vec{v}^{n+\frac{1}{2}} \nabla \tau^{n+\frac{1}{2}} \\
        &- \mu^{n+\frac{1}{2}}\nabla \varphi^{n+\frac{1}{2}}\vec{v}^{n+\frac{1}{2}}
        -\div(\vec{S}(\varphi^{n+\frac{1}{2}},\nabla \vec{v}^{n+\frac{1}{2}}))\cdot \vec{v}^{n+\frac{1}{2}} \ddd\vec{x}, \\
        0 =& \int_\Omega \frac{1}{2\Delta t_n} T^{n+\frac12}_\tau - \frac{\varphi^{n+1}-\varphi^n}{\Delta t_n} \gamma \div(\vec{\sigma}^{n+\frac{1}{2}})\\
    &     \hspace*{10em}   + \nabla \varphi^{n+\frac{1}{2}}\cdot \vec{v}^{n+\frac{1}{2}}\mu^{n+\frac{1}{2}}
        + \eta \frac{|\mu^{n+\frac{1}{2}}|^2}{\rho^{n+\frac{1}{2}}} \ddd \vec{x}.
    \end{align*}
If we add up the latter expressions and use the  definitions 
    \begin{equation*}
    \begin{array}{rcl}
        I^{n+1}_1 &:=
         &\ds \frac{\rho^{n+1}-\rho^n}{\Delta t_n}\left(\frac{T^{n+\frac12}_\mu}{2(\rho^{n+1}-\rho^n)} + \frac{1}{4} \left( |\vec{v}^{n+1}|^2 + |\vec{v}^n|^2 \right) \right)\\[2ex]
        && \ds {}+ \frac{1}{\Delta t_n} \rho^{n+\frac{1}{2}}\vec{v}^{n+\frac{1}{2}}\cdot(\vec{v}^{n+1}-\vec{v}^n)   \\[2ex]
        && \ds {}+\frac{\varphi^{n+1}-\varphi^n}{\Delta t_n}  \left(\frac{T^{n+\frac12}_\tau}{2(\varphi^{n+1}-\varphi^n)}  - \gamma \div(\vec{\sigma}^{n+\frac{1}{2}})\right), \\[3ex]
        I^{n+1}_2 &:=& \div\big(\rho^{n+\frac{1}{2}}\vec{v}^{n+\frac{1}{2}}\big) \tau^{n+\frac{1}{2}} +\rho^{n+\frac{1}{2}}\vec{v}^{n+\frac{1}{2}}\nabla \tau^{n+\frac{1}{2}}, \\[3ex]
        I^{n+1}_3 &:=& \vec{v}^{n+\frac{1}{2}}
        \div(\rho^{n+\frac{1}{2}}\vec{v}^{n+\frac{1}{2}}\otimes\vec{v}^{n+\frac{1}{2}})
        -\div(\rho^{n+\frac{1}{2}}\vec{v}^{n+\frac{1}{2}})|\vec{v}^{n+\frac{1}{2}}|^2\\[2ex]
        &&\ds{}- \frac{1}{2}\rho^{n+\frac{1}{2}}\nabla|\vec{v}^{n+\frac{1}{2}}|^2\vec{v}^{n+\frac{1}{2}}, \\
        I^{n+1}_4 &:=&\ds  - \div(\vec{S}(\varphi^{n+\frac{1}{2}},\vec{v}^{n+\frac{1}{2}}) \cdot \vec{v}^{n+\frac{1}{2}}
        + \eta \frac{|\mu^{n+\frac{1}{2}}|^2}{\rho^{n+\frac{1}{2}}},
    \end{array}
 \end{equation*}
 we get after rearrangement of terms 
 \begin{equation}\label{sumzero}
 \int_\Omega I^{n+1}_1 + I^{n+1}_2 +I^{n+1}_3+I^{n+1}_4 \dd \vec{x} =0.
 \end{equation}
 Note that the boundary condition on $\vec{v}$ implies $I^{n+1}_3\equiv 0$.
Furthermore, we compute with $\eqref{eq:p2:tempsemi}_{4,5,6}$, the boundary conditions, and using the definitions of the terms $T^{n+\frac12}_\mu,\,T^{n+\frac12}_\tau$  from \eqref{Tdefine} the relations
    \begin{align*}
        \Delta t_n \int_\Omega I^{n+1}_1 \dd\vec{x} =& \int_\Omega \rho^{n+1} \tilde{f}(\rho^{n+1},\varphi^{n+1}) + \frac{1}{2}\rho^{n+1}|\vec{v}^{n+1}|^2 + \frac{\gamma}{2}|\vec{\sigma}^{n+1}|^2 \ddd\vec{x} \\
        &- \int_\Omega \rho^n \tilde{f}(\rho^n,\varphi^n) + \frac{1}{2}\rho^n |\vec{v}^n|^2 + \frac{\gamma}{2} |\vec{\sigma}^n|^2 \ddd\vec{x},\\[2ex]
        \int_\Omega I^{n+1}_2 \ddd\vec{x} =&\int_\Omega \div\big(\rho^{n+\frac{1}{2}}\vec{v}^{n+\frac{1}{2}}\tau^{n+\frac{1}{2}}\big) \dd \vec{x}  =   0,\\[2ex]
        \int_\Omega I^{n+1}_4 \ddd \vec{x} =& \int_\Omega \vec{S}(\varphi^{n+\frac{1}{2}},\vec{v}^{n+\frac{1}{2}}) : \nabla \vec{v}^{n+\frac{1}{2}} + \eta \frac{|\mu^{n+\frac{1}{2}}|^2}{\rho^{n+\frac{1}{2}}} \ddd \vec{x}.
    \end{align*}
    Because of \eqref{sumzero} the assertion is shown after adding with respect to $n=0,\ldots,N-1$.\qed
\end{proof}

%%%%%%%%%%%%%%%%%%%%%%%%%%%%%%%%%%%%%%%%%%%%%%%%%%%%%%%%%%%%%%%%%%%%%%%%%%%%%%%%
\subsection{A Fully-Discrete dG Method for the NSAC System}\label{sec:p2:fully_discrete}
Finally, in this section we present
the fully-discrete dG method for \eqref{eq:p2:mixed}.
The approach  combines the two methods presented in the 
previous Sections \ref{sec:p2:spatial} and \ref{sec:p2:temporal}. As in Section \ref{sec:p2:spatial}
we consider the function space ${\mathcal V}_h$ on a conforming triangulation $\mathcal{T}$ of the domain $\Omega$, see  \eqref{functionspace}. Again,  we  decompose 
the time interval according to $0 = t_0 < t_1 < \ldots < t_N = T$, and denote $\Delta t_n = t_{n+1}-t_n$.
%
%The arguments of these sections can be combined to obtain the fully discrete scheme with the desired properties.
%With \eqref{eq:p2:rhof} the fully discrete scheme reads as follows:
%
\begin{Definition}[Fully-discrete 
 dG method]\label{alg:p2:fully_discrete} \ \\
Consider initial functions from \eqref{eq:p2:IC} and \eqref{iniremainder} such
that $\vec{U}_{0,h} = \Pi_h \vec{U}_0 \in {\mathcal V}_h$ holds.  
A function
\[
\vec{U}^{\Delta t}_h:=
(\rho^{\Delta t}_h,\vec{v}^{\Delta t}_h,\varphi^{\Delta t}_h,\mu^{\Delta t}_h,\tau^{\Delta t}_h,\vec{\sigma}^{\Delta t}_h) \in   C^0([0,T]; {\mathcal V}_h)
\]
with $\vec{U}^{\Delta t}_h(\cdot,0) = \vec{U}_{0,h}$ and 
\[
\begin{array}{rcl}
\vec{U}^{\Delta t}_h(\vec{x},t) &=& \ds   \frac{1}{\Delta t_n}\big({\vec{U}^{n+1}_h(\vec{x})-   \vec{U}^{n}_h}(\vec{x})\big) (t- t_n) +   \vec{U}^{n}_h(\vec{x})\\[1.9ex]
&&\quad (\vec{x}\in \Omega,\, t \in [t_n,t_{n+1}], \, n\in\{0,\ldots,N-1\} ),
\end{array}
\]
that obeys to
\begin{equation}\label{eq:reguhfull}
  \begin{array}{c}\rho^{\Delta t}_h > 0,\,   \varphi^{\Delta t}_h \in [0,1]\text{ a.e. in } \Omega \times [0,T],\\[1.1ex]
  ({\rho^{\Delta t}_h)}^{-1}, \,  (\rho^{\Delta t}_h \tilde f(\varphi^{\Delta t}_h,\rho^{\Delta t}_h)), \,   \rho^{\Delta t}_h {|\vec{v}^{\Delta t}_h|}^2  \in C^0([0,T];L^2(\Omega)) % \\[1.1ex]
%  \del_t \rho^{\Delta t}_h(t),    \del_t \varphi_h(t)   \in L^2(\Omega),         \del_t \vec{v}_h(t),{ \del_t\vec{\sigma_h}}(t) \in  (L^2(\Omega))^d
  \end{array}
  \end{equation}
is called  a fully-discrete dG approximation  of the weak solution $\vec{U} \in {\mathcal V}$ of  \eqref{eq:p2:mixed} if the time iterate  
$\vec{U}^{n+1}_h ;=(\rho_h^{n+1}, \vec{v}_h^{n+1}, \varphi_h^{n+1},$ $\mu_h^{n+1},\tau_h^{n+1}, \vec{\sigma}_h^{n+1}) \in \mathcal{V}_h$ satisfies
        \begin{equation}\label{eq:p2:disc}
        \begin{array}{rcl}
            0 &=& \ds \int_\Omega \left(\frac{\rho_h^{n+1}-\rho_h^n}{\Delta t}
            + \div(\rho_h^{n+\frac{1}{2}}\vec{v}_h^{n+\frac{1}{2}})\right) \psi_h \dd \vec{x}%\\[2ex]
        % &&\ds {}
            - \int_{\mathcal{E}} \jump{\rho_h^{n+\frac{1}{2}}\vec{v}_h^{n+\frac{1}{2}}} \avg{\psi_h} \dd s,\\[3ex]
            0 &=& \ds \int_\Omega \left(\rho_h^{n+\frac{1}{2}}\left(\frac{\vec{v}_h^{n+1}-\vec{v}_h^n}{\Delta t}\right) + \div(\rho_h^{n+\frac{1}{2}}\vec{v}_h^{n+\frac{1}{2}}\otimes
            \vec{v}_h^{n+\frac{1}{2}})\right.\\[2ex]
            &&\ds  {}- \div(\rho_h^{n+\frac{1}{2}}\vec{v}_h^{n+\frac{1}{2}})\vec{v}_h^{n+\frac{1}{2}}  \\[2ex]
            &&\ds {}\left. - \frac{1}{2} \rho_h^{n+\frac{1}{2}}\nabla|\vec{v}_h^{n+\frac{1}{2}}|^2 +
            \rho_h^{n+\frac{1}{2}}\nabla \tau_h^{n+\frac{1}{2}}
            - \mu_h^{n+\frac{1}{2}}\nabla \varphi_h^{n+\frac{1}{2}}\right)\cdot \vec{X}_h \dd \vec{x}\\[3ex]
            & &\ds -\int_{\mathcal{E}} \jump{\tau_h^{n+\frac{1}{2}}}\cdot\avg{\rho_h^{n+\frac{1}{2}}\vec{X}_h}
            - \jump{\varphi_h^{n+\frac{1}{2}}}\cdot\avg{\mu_h^{n+\frac{1}{2}}\vec{X}_h} \dd s\\[2ex]
            &&{}\ds+ B_h\big[\varphi_h^{n+\frac{1}{2}};\vec{v}_h^{n+\frac{1}{2}},\vec{X}_h\big],\\[3ex] 
            0&=& \ds \int_\Omega \bigg(\frac{\varphi_h^{n+1}-\varphi_h^n}{\Delta t}
            + \nabla \varphi_h^{n+\frac{1}{2}}\cdot \vec{v}_h^{n+\frac{1}{2}}
            + \eta\frac{\mu_h^{n+\frac{1}{2}}}{\rho_h^{n+\frac{1}{2}}}\bigg) \Theta_h \dd \vec{x}\\[2ex]
            &&{}\ds- \int_{\mathcal{E}} \jump{\varphi_h^{n+\frac{1}{2}}}\cdot\avg{\Theta_h\vec{v}_h^{n+\frac{1}{2}}} \dd s,\\[3ex]
            0&=& \ds \int_\Omega \left(\mu_h^{n+\frac{1}{2}} - \frac{\rho^{n+1} \tilde{f}(\rho^{n+1},\varphi^{n+1})
            -\rho^{n+1} \tilde{f}(\rho^{n+1},\varphi^{n})}{2(\varphi^{n+1}-\varphi^n)} \right.  \\[2ex]
            && \ds \hspace*{4.7em}- \frac{\rho^n \tilde{f}(\rho^{n},\varphi^{n+1})-\rho^n \tilde{f}(\rho^n,\varphi^n)}{2(\varphi^{n+1}-\varphi^n)},\\[2ex] &&\ds \hspace*{6em}\left.\phantom{\int_\Omega} + \gamma \div(\vec{\sigma}_h^{n+\frac{1}{2}})\right)  \chi_h \dd \vec{x} - \int_{\mathcal{E}} \gamma \jump{\vec{\sigma}_h^{n+\frac{1}{2}}}\avg{\chi_h} \dd s
       \end{array} 
       \end{equation}
       \begin{equation*}
       \begin{array}{rcl}
            0&=&\ds \int_\Omega \left( \tau_h^{n+\frac{1}{2}} - \frac{\rho^{n+1} \tilde{f}(\rho^{n+1},\varphi^{n+1})-\rho^n \tilde{f}(\rho^{n},\varphi^{n+1})}{2(\rho^{n+1}-\rho^n)} \right.  \\[2ex]
            &&\ds {}  \hspace*{4.5em}-\frac{\rho^{n+1} \tilde{f}(\rho^{n+1},\varphi^{n})-\rho^n \tilde{f}(\rho^n,\varphi^n)}{2(\rho^{n+1}-\rho^n)} \\[2ex]
            &&\ds{} \left.  \hspace*{10em}-\frac{1}{4}(|\vec{v}_h^{n+1}|^2+|\vec{v}_h^{n}|^2)\right) \zeta_h \dd \vec{x}, \\[3ex]
            0&=& \ds \int_\Omega \left(\vec{\sigma}_h^{n+1} - \nabla \varphi_h^{n+1}\right) \cdot \vec{Z}_h \ddd \vec{x}
            + \int_{\mathcal{E}} \jump{\varphi_h^{n+1}}\cdot\avg{\vec{Z}_h} \dd s
        \end{array}
        \end{equation*}
     for all $n \in \{ 0,\ldots, N-1\}$ and for all test functions  $(\psi_h, \vec{X}_h, \Theta_h, \chi_h, \zeta_h, \vec{Z}_h) \in \mathcal{V}_h$.
\end{Definition}
%\noeqref{eq:p2:disc2}\noeqref{eq:p2:disc3}\noeqref{eq:p2:disc4}\noeqref{eq:p2:disc5}

%
Note that the formulation \eqref{eq:p2:disc} relies exactly on the choice \eqref{eq:fluxchoice} for the fluxes $F_1,\ldots,F_6$. One can of course also choose the stabilized fluxes as in Corollary \ref{cor:stable}.

The discretization is chosen such that the discrete counterpart \ref{thm:p2:discrete_energy_ineq} of the energy inequality is satisfied.

\begin{thm}[Fully-discrete energy stability ]\label{thm:p2:discrete_energy_ineq}
Let $\vec{U}^{\Delta t}_h \in   C^0([0,T]; {\mathcal V}_h)$
 be a fully-discrete dG approximation  of the weak solution $\vec{U} \in  C^0([0,T];{\mathcal V})$ of  \eqref{eq:p2:mixed}.\\
 Then we  have
 \begin{enumerate}
 \item  mass conservation, i.e.
  \[
  \int_\Omega  \rho^{\Delta t}_h(\vec{x},t) \dd \vec{x} =  \int_\Omega  \rho_0(\vec{x}) \dd \vec{x} 
  \]
  for all $t\in [0,T]$ and
  \item the entropy dissipation inequality 
    \begin{equation}\label{energy:fulldiscrete}
    \begin{array}{rcl}
      \hat E^{\Delta t}_h(t_{n+1}) &\le& \hat E^{\Delta t}_h(t_{n})\\
      && \ds {}- \Delta t\int_\Omega \eta\frac{|\mu_h^{n+\frac{1}{2}}|^2}{\rho_h^{n+\frac{1}{2}}} \dd \vec{x}- \Delta t   B_h\Big[\varphi_h^{n+\frac{1}{2}};\vec{v}_h^{n+\frac{1}{2}},\vec{v}_h^{n+\frac{1}{2}}\Big]
    \end{array}
    \end{equation}
    for all $n\in \{0,\ldots,N-1\}$. Thereby we used
    \begin{align*}
     \tilde   E^{\Delta t}_h(t_{n}) = \int_\Omega \rho_h^n\tilde{f}(\rho_h^{n},\varphi_h^{n})+ \frac{\gamma}{2}|\vec{\sigma}_h^{n}|^2 + \frac{\rho_h^{n}}{2}|\vec{v}_h^{n}|^2 &\dd \vec{x}.     
    \end{align*}
\end{enumerate}
\end{thm}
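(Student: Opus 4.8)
The plan is to combine the two semi-discrete analyses already presented: the inter-element flux cancellations from the proof of Theorem~\ref{thm:p2:spatialdisc_conservation} and the implicit--explicit telescoping of the free-energy derivatives from the proof of Theorem~\ref{thm:p2:energy_ineq_discrete}, now carried out simultaneously with every quantity evaluated at the mid-level $n+\frac12$. For mass conservation I would test the first line of \eqref{eq:p2:disc} with $\psi_h\equiv 1\in V_h$: by Proposition~\ref{prop:p2:elem_int} (and $\nabla 1=0$) the volume divergence integral equals $\int_{\mathcal E\cup\partial\Omega}\jump{\rho_h^{n+\frac12}\vec v_h^{n+\frac12}}\,\dd s$, whose boundary part vanishes since $\vec v_h^{n+\frac12}=\vec 0$ on $\partial\Omega$ and whose interior part cancels exactly against $-\int_{\mathcal E}\jump{\rho_h^{n+\frac12}\vec v_h^{n+\frac12}}\avg{1}\,\dd s$. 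Hence $\int_\Omega\rho_h^{n+1}\,\dd\vec x=\int_\Omega\rho_h^n\,\dd\vec x$ for all $n$; since $\Pi_h$ is the $L^2$-projection it reproduces $\int_\Omega\rho_0\,\dd\vec x$ against the constant $1\in V_h$, and the claim for all $t$ follows because $\rho_h^{\Delta t}(\cdot,t)$ is the affine interpolant of the $\rho_h^n$.

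For the energy estimate I would fix $n$, introduce $T^{n+\frac12}_\mu,T^{n+\frac12}_\tau$ as in \eqref{Tdefine}, and choose the test functions in \eqref{eq:p2:disc} analogously to the two earlier proofs: $\psi_h=\tau_h^{n+\frac12}$, $\vec X_h=\vec v_h^{n+\frac12}$, $\Theta_h=\mu_h^{n+\frac12}$, $\chi_h=\Delta t^{-1}(\varphi_h^{n+1}-\varphi_h^n)$, $\zeta_h=\Delta t^{-1}(\rho_h^{n+1}-\rho_h^n)$; and for the sixth relation I would subtract its instances at two consecutive time levels, divide by $\Delta t$, and test with $\vec Z_h=\gamma\vec\sigma_h^{n+\frac12}$, so that the discrete increment $\Delta t^{-1}\big(\tfrac{\gamma}{2}\!\int_\Omega|\vec\sigma_h^{n+1}|^2-\tfrac{\gamma}{2}\!\int_\Omega|\vec\sigma_h^n|^2\big)$ of the gradient energy appears. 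The key role of the staggered discretization in $\eqref{eq:p2:disc}_{4,5}$ is that, when these are used to rewrite the tested equations $\eqref{eq:p2:disc}_{1,3}$, the secant denominators cancel against the increments carried by $\zeta_h$ and $\chi_h$, producing $\tfrac1{2\Delta t}\int_\Omega T^{n+\frac12}_\tau$ (plus the kinetic cross-term $\tfrac1{4\Delta t}(\rho_h^{n+1}-\rho_h^n)(|\vec v_h^{n+1}|^2+|\vec v_h^n|^2)$) and $\tfrac1{2\Delta t}\int_\Omega T^{n+\frac12}_\mu$ (plus the $-\gamma\div\vec\sigma_h^{n+\frac12}$ coupling). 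Adding all six tested equations and using the algebraic identity $T^{n+\frac12}_\mu+T^{n+\frac12}_\tau=2\big(\rho_h^{n+1}\tilde f(\rho_h^{n+1},\varphi_h^{n+1})-\rho_h^n\tilde f(\rho_h^n,\varphi_h^n)\big)$, together with the quadratic identity that combines the kinetic cross-term with $\tfrac1{2\Delta t}\rho_h^{n+\frac12}(|\vec v_h^{n+1}|^2-|\vec v_h^n|^2)$ from $\eqref{eq:p2:disc}_2$, one assembles exactly $\Delta t^{-1}\big(\hat E^{\Delta t}_h(t_{n+1})-\hat E^{\Delta t}_h(t_n)\big)$ on the left-hand side.

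It then remains to show that the convective, pressure and potential volume terms together with the inter-element fluxes cancel. Integrating by parts via Proposition~\ref{prop:p2:elem_int} and using $\vec v_h^{n+\frac12}\in(V_{h,0})^d$ and $\vec\sigma_h^{n+\frac12}\in V_{h,\vec n}$ turns the remaining volume terms into the face integrals $-\int_{\mathcal E}\jump{\rho_h^{n+\frac12}\vec v_h^{n+\frac12}\tau_h^{n+\frac12}}\,\dd s$ and the $\varphi$--$\vec\sigma$ face pairing; added to the fluxes of \eqref{eq:fluxchoice} implicit in \eqref{eq:p2:disc} — now read with $(n+\frac12)$-indexed arguments — these vanish by precisely the conditions \eqref{eq:p2:energy_dissipation_discrete_cond1} of Theorem~\ref{thm:p2:spatialdisc_conservation}, with the $\frac{\dd}{\dd t}F_6$-term there replaced by the increment obtained from subtracting the two instances of $\eqref{eq:p2:disc}_6$. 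What survives is the equality $\hat E^{\Delta t}_h(t_{n+1})-\hat E^{\Delta t}_h(t_n)=-\Delta t\int_\Omega\eta|\mu_h^{n+\frac12}|^2/\rho_h^{n+\frac12}\,\dd\vec x-\Delta t\,B_h[\varphi_h^{n+\frac12};\vec v_h^{n+\frac12},\vec v_h^{n+\frac12}]$, and \eqref{energy:fulldiscrete} follows since $\eta>0$, $\rho_h^{n+\frac12}>0$ and $B_h$ is coercive for $\alpha_B$ large enough (Remark~\ref{rem:coercive}).

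\emph{The main obstacle} I anticipate is the last step, the flux bookkeeping. The delicate point is that $\eqref{eq:p2:disc}_6$ constrains $\vec\sigma_h$ at the full level $n+1$ (with the flux $\jump{\varphi_h^{n+1}}\cdot\avg{\vec Z_h}$), whereas $\eqref{eq:p2:disc}_4$ couples it at the mid-level (with $\gamma\jump{\vec\sigma_h^{n+\frac12}}\avg{\chi_h}$); what makes this reconcile is the linearity of $\jump{\cdot}$, $\avg{\cdot}$ and of $\eqref{eq:p2:disc}_6$ in its data, together with $\vec\sigma_h\cdot\vec n=0$ on $\partial\Omega$, so that the residues $\jump{\varphi_h^{n+1}-\varphi_h^n}\cdot\avg{\vec\sigma_h^{n+\frac12}}$ and $\avg{\varphi_h^{n+1}-\varphi_h^n}\jump{\vec\sigma_h^{n+\frac12}}$ produced by the discrete integration by parts cancel the $F_6$- and $F_4$-type fluxes, respectively. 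One must also be careful to test the secant equations $\eqref{eq:p2:disc}_{4,5}$ against the increments $\Delta t^{-1}(\varphi_h^{n+1}-\varphi_h^n)$ and $\Delta t^{-1}(\rho_h^{n+1}-\rho_h^n)$ rather than mid-level values, so that the secant denominators cancel. The remaining manipulations are the same routine cancellations as in the proofs of Theorems~\ref{thm:p2:spatialdisc_conservation} and \ref{thm:p2:energy_ineq_discrete}.
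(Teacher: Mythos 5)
Your proposal is correct and is essentially the paper's own argument: the paper proves Theorem \ref{thm:p2:discrete_energy_ineq} simply by combining the proofs of Theorem \ref{thm:p2:spatialdisc_conservation} (flux cancellations, testing with $\tau_h$, $\vec{v}_h$, $\mu_h$ and the increments) and Theorem \ref{thm:p2:energy_ineq_discrete} (the staggered secant formulas, the identity $T^{n+\frac12}_\mu+T^{n+\frac12}_\tau=2(\rho^{n+1}\tilde f^{n+1}-\rho^n\tilde f^n)$, and the kinetic-energy telescoping), which is exactly what you carry out, including the correct treatment of $\eqref{eq:p2:disc}_6$ by differencing consecutive time levels. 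Your write-up in fact supplies the bookkeeping details (in particular the $F_4$/$F_6$ face-term cancellation via $\jump{ab}=\jump{a}\avg{b}+\avg{a}\jump{b}$) that the paper leaves implicit.
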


\begin{proof}
    The proof  follows from combining the proofs of Theorem \ref{thm:p2:spatialdisc_conservation} and Theorem \ref{thm:p2:energy_ineq_discrete}.\qed
\end{proof}

\section{Numerical Experiments}\label{sec:NumExp}
For the equations of state in the bulk phases, we choose stiffened gas equations
\begin{align}\label{eq:p2:stiffened_gas}
  \rho f_\mathrm{L/V} = \alpha_\mathrm{L/V} \rho \ln(\rho) + (\beta_\mathrm{L/V}-\alpha_\mathrm{L/V})\rho + \gamma_\mathrm{L/V},
\end{align}
with parameters
\begin{alignat}{4}
    \alpha_\mathrm{L} &= 1.5, \quad &\alpha_\mathrm{V} &= 1,\\
    \beta_\mathrm{L}  &= \ln(2), &\beta_\mathrm{V}  &= 0, \\
    \gamma_\mathrm{L} &= 0,      &\gamma_\mathrm{V} &= 0.5.
\end{alignat}

For the bulk viscosities we set $\mu_\mathrm{L}=\mu_\mathrm{V}=0.001$. The capillary parameter is taken $\gamma = 0.001$ and the mobility $\eta = 1$.
The double well $W$ from \eqref{eq:p2:def_h} is chosen  with  $a=0.1$.\\
We implemented the energy-stable dG method from Definition  \ref{alg:p2:fully_discrete} with the finite
element toolbox FEniCS, which is based on the C++ library DOLFIN \cite{Fenics}. In each time step the nonlinear system is solved by an inexact Newton method. The linear subsystems are approximated by a biconjugate gradient stabilized method (bicgstab) with an incomplete LU preconditioner. The absolute tolerance of both solvers is set to $10^{-10}$.
%%%%%%%%%%%%%%%%%%%%%%%%%%%%%%%%%%%%%%%%%%%%%%%%%%%%%%%%%%%%%%%%%%%%%%%%%%%%%%%%
\subsection{Convergence Studies}

In this section, we conduct numerical experiments to verify the convergence rate of the fully discrete dG method (see equation \eqref{alg:p2:fully_discrete}) with respect to space and time.
We test the convergence properties in one spatial dimension on $\Omega  = [0, 1]$. As there is no known explicit (non-trivial) solution to equation \eqref{eq:p2:mixed}, we use a manufactured solution.
Define
\begin{equation} \label{eq:p2:manufactured_sol}
\begin{array}{c}
\ds    \rho(x,t) := \frac{1}{2}\cos(5\pi t)\cos(2\pi x) + \frac{3}{2}, \quad
    v(x,t) = \cos(5\pi t)\cos(4\pi x), \\[1.8ex]
    \ds
    \varphi(x,t) := \frac{1}{2}\cos(5\pi t)\cos(2\pi x) + \frac{1}{2}.
    \end{array}
\end{equation}
These functions satisfy the NSAC system {\eqref{eq:p2:NSAC} exactly if we 
add source terms $S_{\rho}(x,t), S_v(x,t), $ and $S_{\varphi}(x,t)$ for ${\eqref{eq:p2:NSAC}}$. We used the Sympy python library \cite{Meurer2017} for the symbolical calculations leading to the source terms.

\subsubsection{Order of Convergence in Space}

For the computations we select a time step size depending on the dG polynomial degree.
For $k \in \{0,1\}$ we choose $\Delta t = 10^{\lfloor \log_{10}(\sfrac{1}{N})\rfloor},$ where $N$ denotes the number of cells. For $k\geq 2$ we choose $\Delta t = 10^{\lfloor \log_{10}(\sfrac{1}{N^2})\rfloor}$.

We run the simulation up to $T=0.03$ on grids with different number $N$ of cells.
We found that the convergence rate depends on the choice of stabilization parameters $\alpha_B$ and $\alpha_1$, see \eqref{eq:bilinearform}, \eqref{eq:p2:flux1}. This is especially true for $k=1$.
We list the parameters we used in \autoref{tab:p2:stab_parameter}.
The other parameters $\alpha_2,\, \alpha_3$ in \eqref{eq:p2:flux2}, \eqref{eq:p2:flux3} are chosen to be $0$.

\begin{table}[ht]
    \centering
    \begin{tabular}{ccc}
        \hline
        $k$ & $\alpha_B$    & $\alpha_1$ \\ \hline
        $0$ & 1e-03         & 0           \\
        $1$ & 1.7e-03       & 6e-03       \\
        $2$ & 7e-03         & 1e-03       \\
        $3$ & 2e-02         & 1e-01 \\\hline
    \end{tabular}
    \caption{Numerical stabilization parameters used in the numerical experiments, depending on polynomial degree $k$.}
    \label{tab:p2:stab_parameter}
\end{table}

We investigate the errors of the discrete solution $(\rho_h,v_h,\varphi_h)$ and the exact solution $(\rho,v,\allowbreak \varphi)$ from \eqref{eq:p2:manufactured_sol} in $L^{\infty}(0, T ; L^2(\Omega))$.
The results in Tables \ref{tab:p2:k0}--\ref{tab:p2:k3} indicate that the scheme converges with order $k+1$ in space and order $2$ in time.

\begin{table}[!ht]
    \centering
    \begin{small}
        \begin{tabular}{ccccccc}
            \hline
             $N$ & $\|\rho-\rho_h\|_{L^{\infty}(L^2)}$ & $\mathrm{EOC}_{\rho}$
             & $\|v-v_h\|_{L^{\infty}(L^2)}$ & $\mathrm{EOC}_{v}$
             & $\|\varphi-\varphi_h\|_{L^{\infty}(L^2)}$ & $\mathrm{EOC}_{\varphi}$  % specify table head
             \\ \hline
            \csvreader[head to column names,
                       late after line=\\]{./Graphics/P0_error.csv}{}%
            {\Size & \errorRho & \EOCrho  & \errorV & \EOCv  & \errorPhi & \EOCphi}%
             \hline
         \end{tabular}
     \end{small}
     \caption{Errors in the $L^{\infty}(0, T ; L^2(\Omega))$-norm for dG polynomial degree  $k=0$.}
     \label{tab:p2:k0}
\end{table}

\begin{table}[!ht]
    \centering
   \begin{small}
    \begin{tabular}{@{\!}ccccccc@{\!}}
        \hline
         $N$ & $\|\rho-\rho_h\|_{L^{\infty}(L^2)}$ & $\mathrm{EOC}_{\rho}$
         & $\|v-v_h\|_{L^{\infty}(L^2)}$ & $\mathrm{EOC}_{v}$
         & $\|\varphi-\varphi_h\|_{L^{\infty}(L^2)}$ & $\mathrm{EOC}_{\varphi}$  % specify table head
         \\ \hline
        \csvreader[head to column names,late after line=\\]{./Graphics/P1_error.csv}{}%
        {\Size & \errorRho & \EOCrho  & \errorV & \EOCv  & \errorPhi & \EOCphi }%
         \hline
     \end{tabular}
     \end{small}
     \caption{Errors in the $L^{\infty}(0, T ; L^2(\Omega))$-norm for dG polynomial degree $k=1$.}
\label{tab:p2:k1}
\end{table}

\begin{table}[!ht]
    \centering
    \begin{small}
        \begin{tabular}{@{\!}ccccccc@{\!}}
            \hline
             $N$ & $\|\rho-\rho_h\|_{L^{\infty}(L^2)}$ & $\mathrm{EOC}_{\rho}$
             & $\|v-v_h\|_{L^{\infty}(L^2)}$ & $\mathrm{EOC}_{v}$
             & $\|\varphi-\varphi_h\|_{L^{\infty}(L^2)}$ & $\mathrm{EOC}_{\varphi}$  % specify table head
             \\ \hline
            \csvreader[head to column names,late after line=\\]{./Graphics/P2_error.csv}{}%
            {\Size & \errorRho & \EOCrho  & \errorV & \EOCv  & \errorPhi & \EOCphi }%
             \hline
         \end{tabular}
     \end{small}
     \caption{Errors in the $L^{\infty}(0,T;L^2(\Omega))$-norm for dG polynomial degree $k=2$.}
\label{tab:p2:k2}
\end{table}

\begin{table}[!ht]
    \centering
    \begin{small}
        \begin{tabular}{@{\!}ccccccc@{\!}}
             \hline
             $N$ & $\|\rho-\rho_h\|_{L^{\infty}(L^2)}$ & $\mathrm{EOC}_{\rho}$
             & $\|v-v_h\|_{L^{\infty}(L^2)}$ & $\mathrm{EOC}_{v}$
             & $\|\varphi-\varphi_h\|_{L^{\infty}(L^2)}$ & $\mathrm{EOC}_{\varphi}$  % specify table head
             \\ \hline
            \csvreader[head to column names,late after line=\\]{./Graphics/P3_error.csv}{}%
            {\Size & \errorRho & \EOCrho  & \errorV & \EOCv  & \errorPhi & \EOCphi }%
             \hline
         \end{tabular}
     \end{small}
     \caption{Errors in the $L^{\infty}(0,T;L^2(\Omega))$-norm for dG polynomial degree $k=3$.}
\label{tab:p2:k3}
\end{table}

%%%%%%%%%%%%%%%%%%%%%%%%%%%%%%%%%%%%%%%%%%%%%%%%%
\subsubsection{Order of Convergence in Time}
%%%%%%%%%%%%%%%%%%%%%%%%%%%%%%%%%%%%%%%%%%%%%%%%%%
Since the convergence order for $k=1$ is not evident from the previous example, we present  an additional experiment. It investigates the convergence order with respect to time. In order to ensure a low spatial discretization error we use dG polynomials with  high degree $k=5$ for different $\Delta t$ on a mesh with $64$ cells.
For the numerical simulation we use $\alpha_B = \alpha_1 = 1.5$.
We analyze  the errors of the discrete solution $(\rho_h,v_h,\varphi_h)$ and the exact solution $(\rho,v,\varphi)$  from \eqref{eq:p2:manufactured_sol} in the $L^2(\Omega)$-norm at final time $T=0.03$. The results in \autoref{tab:p2:k5} show clearly  that the scheme converges with order two  in time.

\begin{table}[!ht]
    \centering
    \begin{small}
        \begin{tabular}{ccccccc}
            \hline
             $\Delta t$ & $\|\rho-\rho_h\|_{L^2}$ & $\mathrm{EOC}_{\rho}$
             & $\|v-v_h\|_{L^2}$ & $\mathrm{EOC}_{v}$
             & $\|\varphi-\varphi_h\|_{L^2}$ & $\mathrm{EOC}_{\varphi}$  % specify table head
             \\ \hline
          \csvreader[head to column names,late after line=\\]{./Graphics/P5_error.csv}{}%
           {\dt & \errorRho & \EOCrho  & \errorV & \EOCv  & \errorPhi & \EOCphi }%
             \hline
         \end{tabular}
     \end{small}
     \caption{Errors in the $L^2(\Omega)$-norm at end time  $T=0.03$ for dG polynomial degree $k=5$.}
\label{tab:p2:k5}
\end{table}

%%%%%%%%%%%%%%%%%%%%%%%%%%%%%%%%%%%%%%%%%%%%%%%%%%%%%%%%%%%%%%%%%%%%%%%%%%%%%%%%
\subsection{Discrete Energy Dissipation}\label{sec:p2:ex1}
%%%%%%
 
We consider 
in this section the example of two merging droplets in two dimensions.
We showcase  that phase-field models can handle topological changes. More important, we investigate the discrete energy and validate the statement  on the  energy dissipation from Theorem \ref{thm:p2:energy_ineq_discrete}. The results of this experiment have already been reported in the conference paper
\cite{Massa20}.

For the bulk viscosities we set $\mu_{\mathrm{L}} = 0.0125$ and $\mu_{\mathrm{V}} = 0.00125$. The
capillary parameter is taken
$\gamma =5\cdot 10^{-4}$
and the mobility $\eta = 10$. The polynomial
order of the dG polynomials is $k=2$.

For the EOS, we choose  the stiffened gas equation
\begin{align} %\label{eq:p2:stiffened_gas1}
  \rho f_\mathrm{L/V} = \alpha_\mathrm{L/V} \rho \ln(\rho) + (\beta_\mathrm{L/V}-\alpha_\mathrm{L/V})\rho + \gamma_\mathrm{L/V},
\end{align}
with parameters
\begin{alignat}{4}
    \alpha_\mathrm{L} &= 5, \quad &\alpha_\mathrm{V} &= 1.5,\\
    \beta_\mathrm{L}  &= -4,      &\beta_\mathrm{V}  &= 1.8, \\
    \gamma_\mathrm{L} &= 11,      &\gamma_\mathrm{V} &= 0.324.
\end{alignat}
Initially we have  static conditions, i.e., $\vec{v}_0 = \boldsymbol{0}$, and look at two neighboring droplets of different size. The computational domain is $\Omega = [0,1]\times[0,1]$. The droplets are located at $(0.39,0.5)$ and $(0.6,0.5)$ with radii $0.08$ and $0.12$. The initial density profile is smeared out with value $\rho_\mathrm{L}=2.23$ inside and $\rho_\mathrm{V} = 0.3$ outside the droplet. As expected the droplets join and after some time  form one larger droplet. This evolution with  mobility $\eta = 10$ is depicted in \autoref{fig:p2:merging}.\\
\begin{figure}[ht]
    \centering
    \includegraphics[width=.9\textwidth]{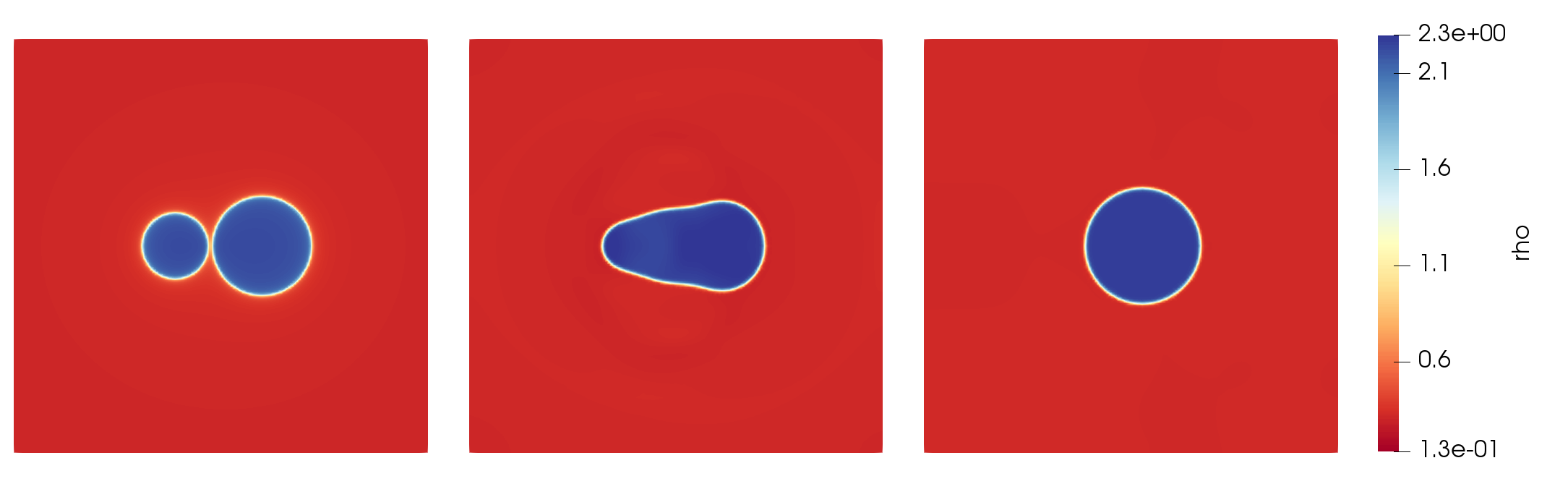}
    \caption{Density $\rho$ at times $t=0$, $t=0.2$, and $t=2$ for the merging of two droplets.}
    \label{fig:p2:merging}
\end{figure}
We can observe that the model handles topological changes easily.  However, the dynamics of the phase-field relaxation are determined by the mobility $\eta$. This is illustrated in \autoref{fig:p2:mergingenergy}, where the discrete energy $\hat E^{\Delta t}_h$  over time for different values of the mobility $\eta$ is plotted.\\
\begin{figure}[ht]
    \centering
   \includegraphics[height=.55\textwidth]{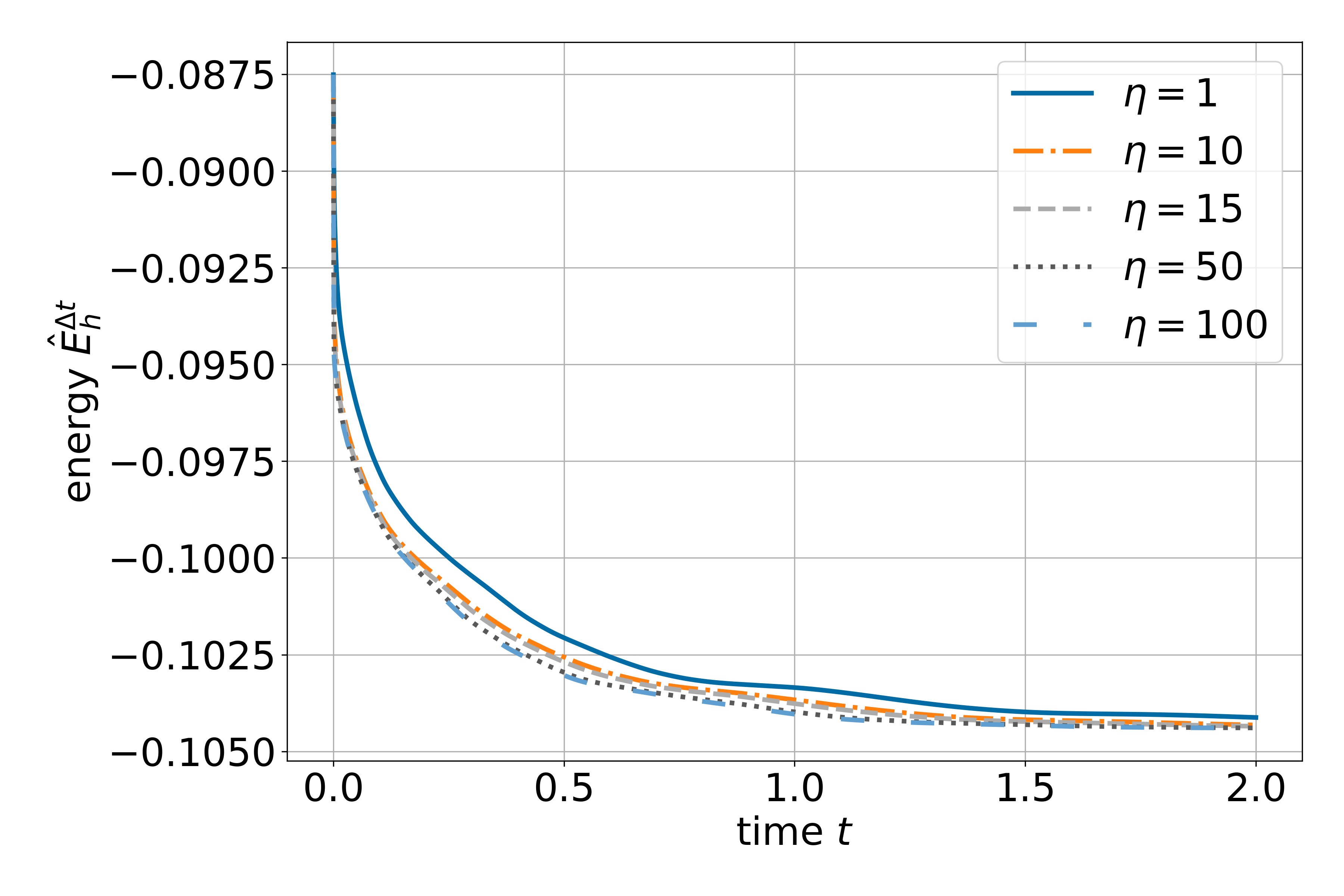}
    \caption{Time evolution of the discrete energy $  E= \hat E^{\Delta t}_h$ with different values for the mobility $\eta$. As expected, the energy decays faster with $\eta$ increasing.}
    \label{fig:p2:mergingenergy}
\end{figure}
We observe that the discrete energy   $\hat E^{\Delta t}_h$ from \eqref{energy:fulldiscrete} decreases, confirming  the statement of  Theorem \ref{thm:p2:energy_ineq_discrete} for our 
in space and time second-order scheme. Moreover, we see that the higher the value of $\eta$, the faster the energy dissipation.

%%%%%%%%%%%%%%%%%%%%%%%%%%%%%%%%%%%%%%%%%%
%%%%%%%%%%%%%%%%%%%%%%%%%%%%%%%%%%%%%%%%%%%%%%%
\section{Conclusions}\label{chap:p2:conclusion}
We presented a  fully-discrete dG  method to solve the initial boundary value problem for the compressible NSAC system under isothermal conditions.  The method is shown to be of second-order in space and time, mass conservative, and to be unconditionally energy stable. 
For the energy stability we exploited the energetic
structure of the evolution equations: the  dG method relies on  an equivalent re-formulation of the original NSAC system such that  the 
nonlinear  variational derivatives of the free energy function can be expressed as elements of the dG ansatz space.\\
We conjecture that the same approach can be used to derive energy-stable schemes for related isothermal systems like Navier--Stokes--Cahn--Hilliard equations \cite{Mulet} or nonlocal 
diffuse-interface approaches \cite{Hitz}. More challenging is the extension of the approach to temperature-dependent two-phase systems or multiphase and multicomponent mixtures with complex viscous stress tensors and diffusion operators. The same applies for 
construction of energy-stable high-order methods in the regime of low Mach numbers which is typically given for at least the liquid phase.

\printbibliography

\end{document}